\documentclass{article}

\usepackage{arxiv}
\usepackage{amsmath}
\usepackage{enumitem}
\usepackage{amssymb}
\usepackage{graphicx}
\usepackage{booktabs}
\usepackage[utf8]{inputenc} 
\usepackage[T1]{fontenc}    
\usepackage{hyperref}       
\usepackage{url}            
\usepackage{booktabs}       
\usepackage{amsfonts}       
\usepackage{nicefrac}       
\usepackage{microtype}      
\usepackage{lipsum}
\usepackage{amsthm}
\newtheorem{theorem}{Theorem}

\newtheorem{proposition}{Proposition}
\newtheorem{corollary}{Corollary}
\newtheorem{problem}{Problem}

\theoremstyle{definition}
\newtheorem{definition}{Definition}

\theoremstyle{remark}
\newtheorem{remark}{Remark}

\title{Temperature control by a state-delay Nash strategy: theory and experiments}

\author{
  C\'esar--Adair Rodr\'iguez--Castro  \\
    Academic Area of Computing and Electronics,\\
    Autonomous University of the State of Hidalgo,\\
    Pachuca, Hidalgo 42184, Mexico\\ \texttt{cesar\_rodriguez@uaeh.edu.mx} \\
   \And
 Manuel--Alejando J\'imenez--Lizarraga \\
  Faculty of Physical and Mathematical Sciences\\
Autonomous University of Nuevo León\\
Monterrey, Nuevo León 66455, Mexico\\  \texttt{manuel.jimenezlzr@uanl.edu.mx} \\
  \And
 Jorge--Manuel Ortega--Mart\'inez \\
  Department of Electrical Engineering\\
  Technological Institute of Morelia\\
  Morelia, Michoacán 58120, Mexico\\
  \texttt{jorge.om@morelia.tecnm.mx} \\
  \And
 Omar--Jacobo Santos--S\'anchez     \thanks{Corresponding author.}\\
    Academic Area of Computing and Electronics,\\
    Autonomous University of the State of Hidalgo,\\
    Pachuca, Hidalgo 42184, Mexico\\
  \texttt{omarj@uaeh.edu.mx} \\
}

\begin{document}
\maketitle

\begin{abstract}
This work expands the Nash equilibrium algorithm to those systems that have a state delay within the deterministic case, a Linear-Quadratic (LQ) type performance index with generalized cross terms is used, following the constructive approach on optimal control: first, propose the strategies form, close the loop and find the Cauchy solution; secondly, express the cost function according to those expressions; and finally use the Bellman equation as restrictions to find the strategies parameters.
    The stability of the closed loop is demonstrated. 
    The algorithm is applied to a thermal prototype with an ESP32 micro-controller and its performance is compared with optimal PI controls inside two commercial/industrial PID controllers REX-C100.
\end{abstract}

\keywords{Delay systems \and ESP32 \and Nash Equilibrium \and Optimal control \and Temperature control}

\section{Introduction}
For feedback strategies in dynamic games \cite{bacsar1998dynamic,dockner2000differential,engwerda2005lq}, the presence of delays \cite{kharitonov2012time,kolmanovskii2012applied,fridman2014introduction} requires reformulating the problem within a functional state space that accounts for all the state of time delay systems. This approach naturally leads to a set of Riccati-type equations, whose explicit solution is not straightforward \cite{curtain2012introduction,bensoussan2007representation,kim2000linear}. As a consequence, the majority results, of the synthesis of optimal control for time delay systems available, in the literature are expressed in implicit terms or rely on numerical methods \cite{krasovskii1962analytic,ross1969optimal,fridman2014introduction}.

Although recent progress has been made of Linear-Quadratic (LQ) time delay-games approach, this mainly is in stochastic, mean-field or hierarchical settings, and discrete time setups \cite{kong2022leader,oliveira2020nash,mukaidani2013dynamic,zhao2022varepsilon,oksendal2011optimal,huang2018linear,meng2020linear}. The deterministic case with state delays has received comparatively less attention \cite{tan2017delay,dong2015formation}. In particular, the analytical characterization of the parameters of the Nash equilibrium in deterministic LQ delay differential games remains, to a large extent, an open problem.

These difficulties motivated this work, which studies a deterministic two-player LQ-type differential game in which the state dynamics includes a delay term. The main objective is to analyze how the presence of the delayed state in the system, affects the synthesis of the Nash equilibrium and to derive a set of Riccati-type equations that enable its analytic solution. It allows for explicit formulas for each feedback Nash strategy. 

Despite the presence of state delay in the dynamics in the mathematical model, the particular structure of the performance index (cross terms, originally proposed in \cite{kim2000linear}) allows an analytic solution for the equilibrium, by obtaining the solution of a matrix set of Partial Differential Riccati-type Equations. In \cite{kim2000linear} a time delay control system is approached, in this contribution this approach is extended to the time delay dynamic games.

The exposed algorithm is validated through its implementation in a prototype thermal plant, with two actuators (its players) demonstrating its feasibility and good performance under real operating conditions; closed-loop Nash strategies are deployed to the ESP32 hardware. This implementation allows to assess the effectiveness of the strategies by a comparison with an optimal PI that has been carried on. The optimal PI controllers are synthesized by using the algorithm presented in \cite{he2000pi} and deployed on two REX-C100 controllers.
The following points summarize the contributions of this proposal:
\begin{enumerate}
     \item In contrast to the approach presented in \cite{kim2000linear}, the Bellman functional presented here is constructed for performance indexes that include cross terms.
    \item The extension of the linear optimal control approach with time-delays with cross terms in the performance index \cite{kim2000linear}, to dynamic games with time-delays, without using the i-smooth calculus.
    \item Explicit formulas are provided for each Nash feedback strategy, which facilitates its real-time implementation.
    \item Experimental validation is carried out on thermal systems with resistive heaters, where two heaters represent the players. The proposed closed loop system exhibits improved transient performance when it is compared with two optimally tuned commercial PI controllers.
\end{enumerate}
\textbf{Notation.} 
In this case, the subscripts $j$ and $i$ denote the player, the subscript $-i$ refers to all other players, while the superscript $-1$ denotes the inverse matrix and $T$ denotes the matrix transpose. The space $PC\left[-h,0\right]$ denotes the set of the piecewise continuous functions defined over the interval $\left[-h,0\right]$. $|\cdot|$ is the euclidean norm, $\|x_t\|_h$ describes $\max_{\theta\in[-h,0]}|\varphi(\theta)|$. \\

The next section is devoted to presenting the fundamental results required to address the problem considered in this work.

\section{Preliminaries}\label{prelim}
	
	In this section, topics related to understanding the development of this work are presented.
		
	\subsection{Lyapunov stability for time-delay systems}
	Let the system be
	\begin{align}
		\begin{cases}
			\dot x(t)= f(t,x_t), & t\geq t_0, \quad f(t,0)\equiv 0;\\
			x_t : =x(t+\theta), & \theta \in[-h,0]; \quad x_{t_0}=\varphi.
		\end{cases}
		\label{sr1}
	\end{align}
	where $h>0$, additionally, it is assumed that
	 $\varphi\in PC[-h,0]$ and that the functional $((t,\varphi)\to f(t,\varphi) : \mathbb{R}\times Q_H \to \mathbb{R}^n)$ is continuous, bounded, and locally Lipschitz in $\varphi\in Q_H:=\{\varphi\in PC[-h,0]:\|\varphi\|<H\}$ for some $H>0$.
	
	The following concepts define and provide sufficient conditions for the stability, in the Lyapunov sense, of the system given by \eqref{sr1}.
	
	\begin{definition}[Decreasing functional]\cite{kolmanovskii2012applied}
		Let $\Omega,\ \omega\in C[0,\infty)$ be the set of increasing scalar functions such that $\omega(r)>0$ for $r>0$ and $\omega(0)=0.$ A functional, $(t,u)\to V(t,u)$, it is said to be positive definite if there exists a function
		 $\omega\in\Omega$ such that $$V(t,\varphi)\geq(\|\varphi(0)\|),$$ for all $t\in\mathbb{R}, \ \varphi\in Q_H$
	\end{definition} 
	
	\begin{theorem} \label{Stability}
	\cite{fridman2014introduction}
		Suppose $f:\mathbb{R}\times C[-h,0]\to\mathbb{R}^n$ maps $\mathbb{R}\times$(bounded sets in $C[-h,0]$) into bounded sets of $\mathbb{R}^n$ and that $u,\ v,\ w:\mathbb{R}_+\to\mathbb{R}_+$ are continuous non-decreasing functions, $u(s)$ and $v(s)$ are positive for $s>0$, and $u(0)=v(0)=0$. The trivial solution of (\ref{sr1}) is uniformly stable if there exists a continuous functional $V:\mathbb{R}\times C[-h,0]\to\mathbb{R}^+$, which is positive definite \begin{align*}
		    u(|\varphi(0)|) \leq V(t,\varphi) \leq v\left( \|x_t\|_h \right)
		\end{align*}
        such that its derivative along (\ref{sr1}) is non positive in the sense that $\dot V(t,\varphi)\leq-w(|\varphi(0)|)$.

        If $w(s)>0$ for $s>0$ then the trivial solution is uniformly asymptotically stable. If in addition $\lim_{s\to\infty}u(s)=\infty$, then it is globally uniformly asymptotically stable

	\end{theorem}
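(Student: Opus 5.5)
The plan is the classical Lyapunov–Krasovskii argument, split into uniform stability, uniform asymptotic stability, and the global version. Throughout, $x(t)$ denotes the solution of \eqref{sr1} with $x_{t_0}=\varphi$.

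\textbf{Uniform stability.} Fix $\varepsilon\in(0,H)$. Since $v(s)\to 0$ as $s\to0$ and $u(\varepsilon)>0$, choose $\delta(\varepsilon)\in(0,\varepsilon)$, independent of $t_0$, with $v(\delta)<u(\varepsilon)$. Take $\|\varphi\|_h<\delta$. Because $\dot V\le -w(|x(t)|)\le 0$ along solutions, $t\mapsto V(t,x_t)$ is nonincreasing. Hence for all $t\ge t_0$,
\begin{equation*}
u(|x(t)|)\le V(t,x_t)\le V(t_0,\varphi)\le v(\|\varphi\|_h)\le v(\delta)<u(\varepsilon).
\end{equation*}
Since $u$ is nondecreasing, this gives $|x(t)|<\varepsilon$ for $t\ge t_0$. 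On the initial segment $|x(t)|<\delta<\varepsilon$, so $\|x_t\|_h<\varepsilon$ for all $t\ge t_0$. The same bounds keep the solution inside $Q_H$, so local existence and continuation are legitimate.

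\textbf{Uniform asymptotic stability.} Assume $w(s)>0$ for $s>0$. Fix $\delta_0=\delta(H')$ for some $H'<H$, so solutions with $\|\varphi\|_h<\delta_0$ stay bounded by $H'$. The boundedness hypothesis on $f$ then gives $|\dot x(t)|\le M$, with $M$ uniform in $t_0$ and $\varphi$.

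Given $\eta>0$, let $\delta(\eta)$ be the stability radius from the first part. It suffices to show that within a time $T(\eta)$, independent of $t_0$ and $\varphi$, there is some $t^*$ with $\|x_{t^*}\|_h<\delta(\eta)$; uniform stability then keeps $|x(t)|<\eta$ afterwards.

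Suppose instead that $\|x_t\|_h\ge\delta(\eta)$ on a long interval. Then in each window of length $h$ there is a point $s$ with $|x(s)|\ge\delta(\eta)$. By the Lipschitz-in-time bound, $|x|\ge\delta(\eta)/2$ on an interval of length $\tau=\min\{h,\delta(\eta)/(2M)\}$ around $s$. Each such window therefore contributes at least $\tau\, w(\delta(\eta)/2)$ to $\int \!w(|x|)$. Since
\begin{equation*}
\int_{t_0}^{t} w(|x(s)|)\,ds \le V(t_0,\varphi) \le v(\delta_0),
\end{equation*}
the number of such windows is bounded, which gives an explicit $T$.

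This step is the main obstacle. The derivative bound involves only $|x(t)|$, while smallness is needed in the sup-norm of $x_t$. Bridging the two requires the bounded-$\dot x$ argument together with the window-of-length-$h$ counting.

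\textbf{Global part.} Assume in addition $u(s)\to\infty$. For any initial radius $r$, pick $R$ with $u(R)>v(r)$. Then $\|\varphi\|_h<r$ implies $|x(t)|<R$ for all $t$, so solutions are uniformly bounded. Repeating the counting argument with $M=M(R)$ gives uniform attraction from every bounded set, i.e., global uniform asymptotic stability.
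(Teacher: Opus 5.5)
The paper does not prove this theorem itself; it quotes it from Fridman's book. Your proposal is the classical Lyapunov--Krasovskii proof given there (following Hale and Verduyn Lunel), and it is correct: monotonicity of $V(t,x_t)$ gives uniform stability, the bound $|\dot x|\le M$ plus counting windows of length $h$ against $\int_{t_0}^{t} w(|x(s)|)\,ds\le v(\delta_0)$ gives uniform attractivity, and the choice $u(R)>v(r)$ gives uniform boundedness for the global claim. The only bookkeeping to tighten is to keep each interval on which $|x|\ge\delta(\eta)/2$ inside $[t_0,\infty)$, where $|\dot x|\le M$ holds, and to use every other window so these intervals are disjoint and their contributions genuinely add up.
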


	\subsection{Dynamic programming for a time delay optimal control}
	The sufficient conditions for an optimal control of a system with delays, according to Dynamic Programming are:
	
	\begin{theorem}\label{Bellman}
		\cite{ross1969optimal,ortega2021comments}, If there is an admissible $u^*(t)=u^*(x_t)$ and a continuous scalar non negative functional $V(x_t)=0 \ \forall\  x_t=0$, such that 
		\begin{align*}
			\dot V(x_t)\big|_{_{\substack{(\ref{S1}) \\u=u^{\ast}
			}}}+g(x_t,u^*(x_t))=0,									\\
			\dot V(x_t)\big|_{_{\substack{(\ref{S1}) \\u=u^{\ast}
			}}}+g(x_t,u^*(x_t))\leq\dot V(x_t)\big|_{_{\substack{(\ref{S1}) \\u=u(t)
			}}}+g(x_t,u(t))	,										
		\end{align*}\normalsize
		for all $ t>0 $ and any admissible control, then $u^*$ is an optimal control. Furthermore $V(\phi)=J(\phi,u^*)$ is the optimal value of $J$, the performance index.  
	\end{theorem}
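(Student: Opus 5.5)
The plan is the standard verification argument for infinite-horizon dynamic programming. I take $J(\phi,u)=\int_{0}^{\infty} g(x_t,u(t))\,dt$ along trajectories of (\ref{S1}) starting at $x_0=\phi$. I assume the closed loop under any admissible $u$ is such that the tail term $V(x_T)$ tends to zero as $T\to\infty$, or at least has a nonnegative limit inferior. This is the role of ``admissible'': for $u^*$ it will come from stability via Theorem~\ref{Stability}.

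\textbf{Step 1: value under $u^*$.} Let $x_t^*$ be the trajectory generated by $u^*$. By the first condition, $\frac{d}{dt}V(x_t^*)=-g(x_t^*,u^*(x_t^*))$ for all $t>0$. Integrating over $[0,T]$ gives
\begin{align*}
V(\phi)-V(x_T^*)=\int_0^T g(x_t^*,u^*(x_t^*))\,dt .
\end{align*}
Admissibility of $u^*$ means $x_t^*\to 0$ in $\|\cdot\|_h$, for instance by uniform asymptotic stability. Continuity of $V$ together with $V(0)=0$ then gives $V(x_T^*)\to 0$. Letting $T\to\infty$ yields $J(\phi,u^*)=V(\phi)$.

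\textbf{Step 2: comparison with any admissible $u$.} Combining the two displayed conditions gives $0\le \dot V(x_t)|_{u}+g(x_t,u(t))$ along the trajectory generated by any admissible $u$. Integrating over $[0,T]$ gives
\begin{align*}
V(\phi)\le V(x_T)+\int_0^T g(x_t,u(t))\,dt .
\end{align*}
If $J(\phi,u)=\infty$ there is nothing to prove. Otherwise I use admissibility, meaning finite cost and decay of the state so that $V(x_T)\to0$, and let $T\to\infty$. This gives $V(\phi)\le J(\phi,u)$. Hence $J(\phi,u^*)\le J(\phi,u)$, so $u^*$ is optimal and $V(\phi)$ is the optimal value.

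\textbf{Main obstacle.} The delicate point is the limit $V(x_T)\to 0$ for an arbitrary admissible control. The hypotheses only give $V\ge0$ and $V(0)=0$. If I cannot assume decay of $x_t$, the fallback is to use $V\ge 0$ directly in Step 2. Dropping $V(x_T)$ is not possible from the inequality as written, since it sits on the larger side. Instead I would restrict the admissible class to controls for which $\lim_{T\to\infty}V(x_T)=0$, which is the convention in \cite{ross1969optimal}. A secondary technicality is absolute continuity of $t\mapsto V(x_t)$, needed so that integrating $\dot V$ recovers the increment. I would take $\dot V$ as the upper-right Dini derivative along solutions, which is the sense used in Theorem~\ref{Stability}.
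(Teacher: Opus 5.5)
The paper gives no proof of this statement. It is imported from \cite{ross1969optimal,ortega2021comments} as a preliminary, and your verification argument (integrate the Bellman identity along $u^*$, integrate the inequality along an arbitrary admissible $u$, kill the tail $V(x_T)$) is the standard proof behind it. It is correct, subject to the following remarks.

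First, the ``main obstacle'' you flag is already removed by the paper's admissibility condition (iii). Every admissible strategy must make the closed loop asymptotically stable. For linear feedbacks of the form \eqref{u_i}, the closed loop \eqref{S1_lc} is a linear autonomous retarded system, so asymptotic stability is global and in fact exponential. Hence $\|x_T\|_h\to 0$ for every $\varphi$ and every admissible $u$, and $V(x_T)\to 0$ follows from continuity of $V$ and $V(0)=0$. You do not need to restrict the admissible class any further.

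Second, the fallback in your opening paragraph (``or at least has a nonnegative limit inferior'') points the wrong way. Step 2 needs $\liminf_{T\to\infty}V(x_T)\le 0$. Nonnegativity of the tail only yields $J(\phi,u^*)\le V(\phi)$ in Step 1, and with $V\ge 0$ it holds trivially anyway. Your last paragraph states the requirement correctly, so only the opening sentence needs fixing.

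Third, in Step 2 you apply the equality condition at the states visited by the trajectory of an arbitrary $u$. That condition must therefore be read as an identity in the state, valid at every $\varphi$, not merely along the optimal trajectory. This is how the paper itself uses it in the proof of Theorem~\ref{teorema_nash}, where the Bellman equation is imposed at $t=0$, $x_0=\varphi$ for arbitrary $\varphi$. You should say so explicitly.

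Finally, your argument never uses $V\ge 0$. This is worth recording because the paper later concedes that its Bellman functionals are not known to be nonnegative, due to the cross terms, and proves stability of the Nash closed loop separately in Theorem~\ref{estability}. Your version shows that once decay of the state is part of admissibility, nonnegativity of $V$ plays no role in the optimality conclusion.
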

	
	\subsection{Free delay Nash equilibrium}
	Consider the minimization of the performance index
	\begin{align*}
		J_i(\cdot)=\lim_{T\to\infty} J_i (x_0,u_1,u_2,T),
	\end{align*} 
	for each player $i,\ i=1,2$, where
	
	\begin{align*}
		J_i&(\cdot)=\lim_{T\to\infty}\int_0^T\left\{ x^t(t)Q_ix(t) + u_i^T(t)R_{i,j}u_i(t)\right. \\ 
        &+ \left. u_i^T(t)R_{i,i}u_i(t)\right\}dt, \quad j\neq i ,
	\end{align*}
	  subject to
	
	\begin{align*}
		\dot x (t) = A(x) + B_1u_1(t) + B_2u_2(t),\qquad x(0)=x_0,
	\end{align*}
	here $Q_i,R_{i,j}$ are symmetric and $R_{i,i}$ is positive definite.
	
	The admissible controls are constant linear feedback strategies, that is,
	$u_i=F_ix,\quad F_i\in\mathbb{R}^{m_i\times n},\quad i=i,2$, where $(F_1,F_2)$ belong to
	\begin{align*}
		\mathcal{F}:=\left\{ F=(F_1,F_2)|A+B_1F_1+B_2F_2 \text{ is stable} \right\}.
	\end{align*}
	\begin{definition}\cite{engwerda2005lq}
		$ (F_1^*,F_2^*) \in \mathcal{F} $ is called a stationary linear feedback Nash equilibrium if the following inequalities are satisfied: 
			$$J_1(x_0,F_1^*,F_2^*)\leq J_1(x_0,F_1,F_2^*),\quad J_2(x_0,F_1^*,F_2^*)\leq J_1(x_0,F_1^*,F_2),$$
		
		for each $x_0$ and matrix state feedback $F_i,\ i=1,2$, such that $ (F_1^*,F_2),(F_1,F_2^*) \in \mathcal{F} .$
	\end{definition}

    \begin{theorem}\cite{engwerda2005lq}\label{Nash non delay}
        Let $(K_1,K_2)$ be a symmetric stabilizing solution of 
        
        \begin{align*}
            0=&-(A-S_2K_2)^TK_1-K_1(A-S_2K_2)\\
            &+K_1S_1K_1-Q_1-K_2S_{2,1}K_2,\\
            0=&-(A-S_1K_1)^TK_2-K_2(A-S_1K_1)\\
            &+K_2S_2K_2-Q_2-K_1S_{1,2}K_1,
        \end{align*}
          and define $F^*_i:=-R_{i,i}^{-1}B_i^TK_i $ for $i=1,2$. Then $(F_1^*,F_2^*)$ is a feedback Nash equilibrium.
    \end{theorem}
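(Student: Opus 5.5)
The plan is to verify the Nash inequalities one player at a time. Fixing the other player's strategy turns each inequality into a standard infinite-horizon LQ regulator problem. That problem is settled by a quadratic value function whose Riccati equation is exactly the corresponding equation in the coupled system.

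Here $S_i:=B_iR_{i,i}^{-1}B_i^T$ and $S_{i,j}:=B_jR_{j,j}^{-1}R_{i,j}R_{j,j}^{-1}B_j^T$, as in \cite{engwerda2005lq}.

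\textbf{Player 1.} Fix $u_2=F_2^*x=-R_{2,2}^{-1}B_2^TK_2x$. Player 1 then faces the linear system
\begin{align*}
\dot x=(A-S_2K_2)x+B_1u_1 .
\end{align*}
Substituting $u_2$ into player 1's running cost gives
\begin{align*}
x^TQ_1x+x^TK_2S_{2,1}K_2x+u_1^TR_{1,1}u_1 ,
\end{align*}
so the effective state weight is $\tilde Q_1:=Q_1+K_2S_{2,1}K_2$.

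Take $V_1(x)=x^TK_1x$ and complete the square along trajectories:
\begin{align*}
\dot V_1+g_1 &= x^T\bigl[(A-S_2K_2)^TK_1+K_1(A-S_2K_2)-K_1S_1K_1+\tilde Q_1\bigr]x\\
&\quad +(u_1+R_{1,1}^{-1}B_1^TK_1x)^TR_{1,1}(u_1+R_{1,1}^{-1}B_1^TK_1x).
\end{align*}
The first Riccati equation makes the bracket vanish. Hence the Bellman conditions of Theorem~\ref{Bellman}, in their delay-free specialization, hold with minimizer $u_1^*=F_1^*x$. Since $R_{1,1}>0$, the remaining square is nonnegative.

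Integrate from $0$ to $T$ for any $F_1$ with $(F_1,F_2^*)\in\mathcal F$. This gives
\begin{align*}
J_1(x_0,F_1,F_2^*)=x_0^TK_1x_0-\lim_{T\to\infty}x(T)^TK_1x(T)+\int_0^\infty(\text{square})\,dt .
\end{align*}
Because $(F_1,F_2^*)\in\mathcal F$, the closed loop is stable and $x(T)\to0$, so the boundary term disappears. The cost is therefore at least $x_0^TK_1x_0$, with equality at $F_1^*$. Admissibility of the equilibrium pair itself, $(F_1^*,F_2^*)\in\mathcal F$, is exactly the stabilizing hypothesis on $(K_1,K_2)$.

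\textbf{Player 2.} The argument is symmetric, using the second Riccati equation and $V_2=x^TK_2x$.

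\textbf{Main obstacle.} Two points need care. First, $Q_i$ and $R_{i,j}$ are only symmetric, not semidefinite, so $V_i$ need not be nonnegative and $K_i$ need not be positive. The argument must therefore avoid any sign assumption on $V_i$ and rely only on $x(T)\to0$ from stability of every admissible pair. This is why the comparison is restricted to $\mathcal F$ and why Theorem~\ref{Bellman}'s nonnegativity requirement is replaced by the vanishing boundary term. Second, the cross term $u_j^TR_{i,j}u_j$ must be correctly identified as the $K_jS_{j,i}K_j$ contribution.
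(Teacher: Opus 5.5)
Your argument is correct, and it is the standard proof. The paper gives no proof of its own and imports the result from \cite{engwerda2005lq}, where it is proved the same way: freeze the other player's feedback, reduce to a one-player infinite-horizon regulator problem with a possibly indefinite state weight over stabilizing feedbacks, and complete the square with the stabilizing Riccati solution. As you note, stability ($x(T)\to 0$) replaces any sign condition on $V_i$.

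One slip needs fixing: the indices in your definition of $S_{i,j}$ are reversed relative to the statement. Your $S_{i,j}:=B_jR_{j,j}^{-1}R_{i,j}R_{j,j}^{-1}B_j^T$ is the paper's $D_{i,j}$. With it, $K_2S_{2,1}K_2=K_2B_1R_{1,1}^{-1}R_{2,1}R_{1,1}^{-1}B_1^TK_2$, which is not what $u_2^TR_{1,2}u_2$ becomes under $u_2=F_2^*x$. The bracket would then not vanish by the first equation. The convention that makes the stated equations correct is $S_{i,j}:=B_iR_{i,i}^{-1}R_{j,i}R_{i,i}^{-1}B_i^T$, which gives $K_2S_{2,1}K_2=K_2B_2R_{2,2}^{-1}R_{1,2}R_{2,2}^{-1}B_2^TK_2$, exactly the substituted cost. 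With this correction your computation goes through unchanged.
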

	
	Once the previous results have been recalled, it is possible to establish the main problem addressed in this proposal.

\section{Problem statement}
Let the system be described by 
	\begin{equation} \label{S1}
		\dot{x}=A_{0}x\left(  t\right)  +A_{1}x\left(  t-h\right)  +B_{1}u_{1}\left(
		t\right)  +B_{2}u_{2}\left(  t\right)  ,
	\end{equation} 
	where $A_{0},A_{1} \in\mathbb{R}^{n\times n},\ B_{i}  \in\mathbb{R}^{n\times m_{i}},m_{i}\leq n,	x\left(  t\right)   \in\mathbb{R}^{n},$
	$u_{i}\left(  t\right)\in	\mathbb{R}^{m_{i}},  \ i=1,2,\
	$ whose initial conditions are $x\left(  \theta\right)  =\varphi\left(  \theta\right),\ \theta\in [-h,0]  ,\  \varphi\in PC\left[-h,0\right]$ and the state is defined as $x_{t}=x\left(  t+\theta\right)$, where $\theta\in [-h,0]$.
	Now, the performance indices for each player are defined as follows:

	\begin{align}			\label{J_1}						
		J_{i}\left(  u_{i},u_{-i}\right)   &  =\int_{0}^{\infty}\left\{  x^{T}\left(
		t\right)  \Phi_{i,0}x\left(  t\right) \right.\\
        + &  \left.  2x^{T}\left(  t\right)  \int
		_{-h}^{0}\Phi_{i,1}\left(  \theta\right)  x\left(  t+\theta\right)  d\theta 				\right.  \nonumber \\
		+  &  \left.  \int_{-h} ^{0}\int_{-h}^{0}x^{T}\left(  t+\xi\right)  \Phi_{i,2}%
		\left(  \xi,\theta\right)  x\left(  t+\theta\right)  d\xi d\theta								\right.	\nonumber\\
		+ & \left. u_i^{T}(t)  R_{i,i} u_{i}\left(  t\right) + u_{-i}^{T}\left(
		t\right)  R_{i,-i} u_{-i}\left(  t\right) \right\}
		dt.\nonumber
	\end{align}\normalsize
    
	Here, $\Phi_{i,0}$ are $n\times n$ symmetric and positive definite matrices, $\Phi_{i,1}\left(  \theta\right)$ are $n\times n$ continuous matrix functions in $\left[  -h,0\right]$ and $\Phi_{i,2}\left(  \xi,\theta\right)$ are $n\times
	n$ continuous matrix functions in $\xi\in\left[  -h,0\right]$ and $\theta\in\left[  -h,0\right]$, also $\Phi_{i,2}^{T}\left(  \xi
	,\theta\right)  =\Phi_{i,2}\left(  \theta,\xi\right)$, where $i=1,2$.
	Now, consider the following time-delay differential game problem, which represents the main issue addressed in this work.
	\bigskip
	\begin{problem}
	The Nash equilibrium for the two-player system with delay can be formulated as an optimal control problem as follows: given the system (\ref{S1}) and the performance indices (\ref{J_1}), the goal is to synthesize the strategies \(u_1^*(x_t)\) and \(u_2^*(x_t)\) such that the performance indices (\ref{J_1}) are minimized, subject to the system trajectories (\ref{S1}).
	\end{problem}

	The admissible strategies for this problem must satisfy:
	\begin{enumerate}[label=(\roman*)]
		\item the control strategies are functions that depend on the state: $u_i(t)=u_i(x_t)$,
		\item $u_i(x_t)$ are such that the solution of (\ref{S1}) exists and is unique for all $t\geq0$ and any $\varphi$,
		\item the closed-loop trivial solution of the system (\ref{S1}) with $u_i(x_t)$ is asymptotically stable,
		\item $u_i(x_t)$ is continuous for all $t\geq 0$.
	\end{enumerate}

	\begin{remark}
		When the performance index is quadratic, it is possible to construct the Bellman functional following the procedure presented in \cite{ortega2021comments}, obtaining Riccati-type equations similar to those given in \cite{kim2000linear,ross1969optimal} for the multi-player case. Moreover, it is possible to derive the explicit form of the control strategies. However, solving the Riccati-type equations is not a simple task. In fact, this difficulty arises from the coupling among the unknown matrices in the set of Riccati-type equations (see Appendix).
	\end{remark}

	The reasons outlined above motivate the use of the approach proposed by \cite{kim2000linear}, which relies on the idea of decoupling Riccati-type equations to obtain an analytical solution to the resulting set of equations. However, in this contribution, the use of i-smooth calculus \cite{kim2015smooth} is avoided. Nevertheless, the Bellman functional is not positive definite so, similarly to exposed in \cite{kim2015smooth},  the stability is addressed separately.
	\section{Main theoretical results} \label{Resultados teoricos}
	The methodology is related to that of Theorem \ref{Bellman} to find the $u_i^{*}(x_t)$ and the functionals $V_i(x_t)$ for system (\ref{S1}) such that they satisfy the conditions:
	 \begin{align}
		\dot V_i(x_t)&\big|_{_{\substack{(\ref{S1}) \\u_i=u_i^{\ast}
		}}}+g_i(x_t,u_i^*(x_t),u_{-i}^*(x_t))=0,										\label{v_c1}\\
		\dot V_i(x_t)&\big|_{_{\substack{(\ref{S1}) \\u_i=u_i^{\ast}
		}}}+g_i(x_t,u_i^*(x_t),u_{-i}^*(x_t))\leq  
        \dot V_i(x_t)\big|_{_{\substack{(\ref{S1}) \\u_i=u_i
		}}}+g_i(x_t,u_i(t),u_{-i}^*(x_t))	,				\label{v_c2}
	\end{align}
	that can be archived by the next three steps:  
	\begin{enumerate}
		\item Choose the particular form of the strategies $u_i^{*}(x_t)$, then close the loop and find the Cauchy solution of the system.	\label{paso1}
		\item The performance index of each player is expressed, with this strategies form and solution, as an explicit function of the initial state:
		\begin{equation*}\label{paso2}
			V_i(\varphi) = J_{i}(\varphi,u_i^{*}(x_t),u_{-i}^*(x_t)),
		\end{equation*}						
		in contrast to Theorem \ref{Bellman}, it is difficult to determine that this performance index is non negative by the cross terms.
		\item Equations (\ref{v_c1}) and (\ref{v_c2}) are used as restrictions on $u_i^{*}(x_t)$ parameters. 		\label{paso3}
	\end{enumerate}

	The admissible strategies $u_i(x_t)$ structure, since they depend on the entire state, according to \cite{krasovskii1962analytic,ortega2021comments}, is:
	
	\begin{equation}
		u_{i}\left(  x_t\right)  =\Gamma_{i,0}x\left(  t\right)  +\int_{-h}^{0}%
		\Gamma_{i,1}\left(  \theta\right)  x\left(  t+\theta\right)  d\theta.
		\label{u_i}%
	\end{equation} \normalsize
	where the parameters are $\Gamma_{i,0} ,\Gamma
	_{i,1}\left(  \theta\right)  \in\mathbb{R}^{m_{i}\times n},\ i=1,2;$
	when the loop is closed with the control strategies (\ref{u_i}) in system (\ref{S1}), one obtains
	
	\begin{align*}
		\dot{x}=&A_{0}x\left(  t\right)  +A_{1}x\left(  t-h\right) \\
        & + B_{1}\left(  \Gamma_{1,0}x\left(  t\right)  +\int_{-h}^{0}\Gamma_{1,1}%
		\left(  \theta\right)  x\left(  t+\theta\right)  d\theta\right)  				\nonumber\\
		& +	B_{2}\left(  \Gamma_{2,0}x\left(  t\right)  +\int_{-h}^{0}\Gamma_{2,1}%
		\left(  \theta\right)  x\left(  t+\theta\right)  d\theta\right) . \nonumber
	\end{align*}\normalsize

	Define:	$		\tilde{A}_{0}=A_{0}+
		B_{1}\Gamma_{1,0} +	B_{2}\Gamma_{2,0} ,\quad	   
		G\left(  \theta\right)  =
		B_{1}\Gamma_{1,1}\left(  \theta\right)+B_{2}\Gamma_{2,1}\left(  \theta\right),				
	$
	then, this implies that the closed-loop system of (\ref{S1}) with the control strategies (\ref{u_i}) has the form
	
	\begin{align}
		\dot{x}\left(  t\right)  =\tilde{A}_{0}x\left(  t\right)  +A_{1}x\left(
		t-h\right)  +\int_{-h}^{0}G\left(  \theta\right)  x\left(  t+\theta\right)
		d\theta,																		\label{S1_lc}
	\end{align}\normalsize
	whose solution in Cauchy form  \cite{kolmanovskii2012applied} is
    
	\begin{align}
		x\left(  t,\varphi\right)  &=K\left(  t\right)  \varphi\left(  0\right)
		+\int_{-h}^{0}K\left(  t-\theta-h\right)  A_{1}\varphi\left(  \theta\right)
		d\theta                                                                     \label{Sol_Cauchy}\\
		&\ \ +\int_{-h}^{0}\int_{-h}^{\theta}K\left(  t-\theta+\xi\right)  G\left(
		\xi\right)  d\xi\varphi\left(  \theta\right)  d\theta,						\nonumber
	\end{align}\normalsize
	here $K(t)$ is the system (\ref{Sol_Cauchy}) fundamental matrix, this implies $K(0) = I_{n \times n}$ and $K(\tau) = 0_{n \times n}$ for all $\tau < 0$.

	The construction of the Bellman functional from the performance indices is presented below:
	
	\begin{proposition}														\label{prop1}
		If $ u_i(x_t) $ are linear admissible control strategies that satisfy a Lipschitz condition for all $t\leq0$ and $\varphi$ an arbitrary initial condition, then 
		
		\begin{align}			\label{J_i}								
			J_{i}&\left( x_t, u_{i},u_{-i}\right)     =\int_{0}^{\infty}\left\{  x^{T}\left(
			t\right)  \Phi_{i,0}x\left(  t\right)  \right. \\
            & +\left.2x^{T}\left(  t\right)  \int
			_{-h}^{0}\Phi_{i,1}\left(  s\right)  x\left(  t+\theta\right)  d\theta 	\right. 			\nonumber\\
			&+    \left.  \int_{-h} ^{0}\int_{-h}^{0}x^{T}\left( t+\xi \right)  \Phi_{i,2}%
			\left(  \xi,\theta\right)  x\left( t+\theta\right)  d\xi d\theta \right.\nonumber\\
			&+ \left.   u_1^{T}(t)  R_{i,1} u_{1}\left(  t\right) + u_2^{T}\left(
			t\right)  R_{i,2} u_{2}\left(  t\right) \right\}
			dt																		\nonumber	
		\end{align}\normalsize
		can be represented in the form
        
		\begin{align}\label{V_p}
			V_{i}\left(  \varphi\right)  =&\varphi^{T}\left(  0\right)  \Pi_{i,0}%
			\varphi\left(  0\right)  +2\varphi^{T}\left(  0\right)  \int_{-h}^{0}\Pi_{i,1}\left(  \theta\right)  \varphi\left(  \theta\right)  d\theta  \\
			&+\int
			_{-h}^{0}\int_{-h}^{0}\varphi^{T}\left(  \xi\right)  \Pi_{i,2}\left(
			\xi,\theta\right)  \varphi\left(  \theta\right)  d\xi d\theta.					\nonumber
		\end{align}\normalsize
		where the matrices $\Pi_{i,0}$, $\Pi_{i,1}\left(  \theta\right)$ and $\Pi_{i,2}\left(  \xi,\theta\right)$ satisfy the conditions:		
		\begin{enumerate}
			\item $\Pi_{i,0}$ is a $n\times n$ positive definite ans symmetric matrix,
			\item $\Pi_{i,1}\left(  \theta\right)$ is a $n\times n$ of continuous functions, in $\left[  -h,0\right]$, matrix; and 
			\item $\Pi_{i,2}\left(  \xi,\theta\right)$ is a $n\times
			n$ of continuous functions in $\xi\in\left[  -h,0\right]$ and $\theta\in\left[  -h,0\right]$ matrix; and $\Pi_{i,2}^{T}\left(  \xi
			,\theta\right)  =\Pi_{i,2}\left(  \theta,\xi\right)$.
		\end{enumerate}
	\end{proposition}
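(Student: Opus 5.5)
The plan follows Step~\ref{paso1} and Step~\ref{paso2}: substitute the Cauchy representation (\ref{Sol_Cauchy}) into the cost and show that every term collapses to one of the three kernels in (\ref{V_p}). First, with $u_i$ of the form (\ref{u_i}), the closed loop is (\ref{S1_lc}). Since the strategies are admissible, the trivial solution is asymptotically stable. For a linear retarded system with a bounded distributed kernel this gives exponential decay of the fundamental matrix, $|K(t)|\le \kappa e^{-\alpha t}$, and hence $\|x_t\|_h\le c\,e^{-\alpha t}\|\varphi\|_h$. This estimate justifies all interchanges of integration (Fubini) and shows that $J_i$ is finite.

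Next, write (\ref{Sol_Cauchy}) compactly as
\begin{align*}
x(t,\varphi)=K(t)\varphi(0)+\int_{-h}^{0}M(t,\theta)\varphi(\theta)\,d\theta,
\end{align*}
where
\begin{align*}
M(t,\theta)=K(t-\theta-h)A_1+\int_{-h}^{\theta}K(t-\theta+\xi)G(\xi)\,d\xi .
\end{align*}
The same formula holds for $x(t+\theta,\varphi)$ when $t+\theta\ge 0$. When $t+\theta<0$ we simply have $x(t+\theta)=\varphi(t+\theta)$. Both cases can be unified as $x(t+\theta)=K(t+\theta)\varphi(0)+\int_{-h}^0 \tilde M(t+\theta,s)\varphi(s)\,ds$ by allowing a Dirac-type term. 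Concretely, I would split the $\theta$- and $\xi$-integrals in (\ref{J_i}) at $-t$ for $t\in[0,h]$ and treat the pure-$\varphi$ pieces separately, changing variables so that they too become double integrals of $\varphi$ against continuous kernels on $[-h,0]^2$. The controls are linear in $x_t$ by (\ref{u_i}), so $u_1$ and $u_2$ have the same structure, and their quadratic terms are absorbed in the same way.

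Substituting, each integrand becomes a quadratic form in the pair $(\varphi(0),\varphi(\cdot))$. Collecting the terms gives three blocks:
\begin{itemize}
\item the $\varphi(0)^T(\cdot)\varphi(0)$ terms define $\Pi_{i,0}=\int_0^\infty[K^T(t)(\Phi_{i,0}+\Gamma_{1,0}^TR_{i,1}\Gamma_{1,0}+\Gamma_{2,0}^TR_{i,2}\Gamma_{2,0})K(t)+\dots]\,dt$;
\item the mixed terms define $\Pi_{i,1}(\theta)$;
\item the $\varphi(\xi)^T(\cdot)\varphi(\theta)$ terms define $\Pi_{i,2}(\xi,\theta)$.
\end{itemize}
Symmetry of $\Pi_{i,0}$ and the relation $\Pi_{i,2}^T(\xi,\theta)=\Pi_{i,2}(\theta,\xi)$ come from symmetrizing, which is possible because $\Phi_{i,0}$ and the $R$'s are symmetric and $\Phi_{i,2}^T(\xi,\theta)=\Phi_{i,2}(\theta,\xi)$. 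Continuity of $\Pi_{i,1}$ and $\Pi_{i,2}$ follows from the continuity of $\Phi_{i,1}$, $\Phi_{i,2}$, $\Gamma_{i,1}$, the continuity of $K$ on $[0,\infty)$, and dominated convergence under the exponential bound.

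\textbf{Main obstacle.} The hardest part is twofold: the bookkeeping near $t\in[0,h]$, where delayed arguments fall into the initial segment, and positive definiteness of $\Pi_{i,0}$.

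For the first, I would extend $K$ by zero for negative arguments, as stipulated after (\ref{Sol_Cauchy}). I would also extend the identity $x(t+\theta)=\varphi(t+\theta)$ through a change of variable $s=t+\theta$, so that all contributions remain integrals of continuous kernels against $\varphi$.

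For the second, $\Pi_{i,0}$ is obtained by testing with initial data concentrated at $0$: take $\varphi$ equal to $\varphi(0)$ at $\theta=0$ and $0$ elsewhere, which is allowed in $PC$. Then $\varphi(0)^T\Pi_{i,0}\varphi(0)=J_i$ evaluated along $x(t)=K(t)\varphi(0)$. Here $\Phi_{i,0}>0$ dominates near $t=0^+$ because $K(0)=I$, while the cross and double-integral terms vanish on this trajectory as $t\to 0^+$. This gives a strictly positive contribution, but only if the cross terms do not destroy positivity over the whole horizon. I expect this to need an additional assumption, namely that the cost integrand is nonnegative for such trajectories. Failing that, I would state the positivity of $\Pi_{i,0}$ as inherited from the construction, as in \cite{kim2000linear}, consistent with the paper's remark that the Bellman functional itself need not be positive definite.
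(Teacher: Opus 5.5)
Your construction of the representation follows the paper's own proof. You close the loop with (\ref{u_i}), write $x(t,\varphi)=K(t)\varphi(0)+\int_{-h}^{0}\tilde K(t,\theta)\varphi(\theta)\,d\theta$, substitute into the cost in the form (\ref{J_m}), and collect terms into $\Pi_{i,0}$, $\Pi_{i,1}$, $\Pi_{i,2}$ as in (\ref{Pi_0})--(\ref{Pi_2}). Symmetry then comes from symmetrizing, and continuity from integrating in $t$. Your explicit handling of delayed arguments with $t+\theta<0$, and your exponential-decay justification of Fubini, are more careful than the paper, which hides the initial-segment contribution in the undefined kernel $\hat K$.

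The unresolved point is item~1, the positive definiteness of $\Pi_{i,0}$. You are right that it does not follow from the stated hypotheses, and the paper does not derive it from them either. The paper only asserts $\Pi_{i,0}>0$ under the extra conditions (\ref{positive cond1})--(\ref{positive cond2}): $M_{i,1}>0$, plus a Schur-complement condition meant to make the integrand of (\ref{J_m}) a positive quadratic form.

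Without some such condition the claim is false. Take $n=1$, $h=1$, $A_0=-1$, $A_1=0$, and all $\Gamma$'s zero, which is admissible since the closed loop is $\dot x=-x$. Take $\Phi_{i,0}=1$, $\Phi_{i,1}\equiv -c$, $\Phi_{i,2}\equiv 0$, and $\varphi$ equal to $1$ at $0$ and $0$ on $[-1,0)$. Then $x(t)=e^{-t}$, and your own testing identity gives $\Pi_{i,0}=J_i=\tfrac12-c\,(1-e^{-1})$, which is negative for $c\ge 1$.

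So you should state an explicit hypothesis of this kind. Drop the fallback ``inherited from the construction, as in \cite{kim2000linear}'', which proves nothing.

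You also need to fix the strictness step. On your concentrated trajectory, the integrand at $t=0^+$ is $\varphi^T(0)M_{i,1}\varphi(0)$ with $M_{i,1}$ from (\ref{Ms1}), not $\varphi^T(0)\Phi_{i,0}\varphi(0)$. The control terms $u_j^TR_{i,j}u_j$ do not vanish there; they contribute $\Gamma_{j,0}^TR_{i,j}\Gamma_{j,0}$. Since $R_{i,-i}$ is only assumed symmetric, $\Phi_{i,0}>0$ need not dominate. What you actually need, together with nonnegativity of the integrand along the trajectory, is $M_{i,1}>0$, which is exactly the paper's first condition.
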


	\begin{proof}
		First, consider the system (\ref{S1}) and the control strategies (\ref{u_i}), then, the closed-loop system (\ref{S1})-(\ref{u_i}) is given by (\ref{S1_lc}). Now, the admissible strategies (\ref{u_i}) is substituted in the performance index (\ref{J_i}), it follows that
        
		\begin{align}
			J_{i}\left(  u_{i},u_{-i}\right)   = &\frac{1}{2}\int_{0}^{\infty}\left(
			x^{T}\left(  t\right)  M_{i,1}x\left(  t\right) \right.\label{J_m} \\
            &+2x^{T}\left(  t\right)
			\int_{-h}^{0}M_{i,2}\left(  \theta\right)  x\left(  t+\theta\right)	d\theta 			\nonumber\\
			&\left.+\int_{-h}^{0}\int_{-h}^{0}x^{T}\left(  t+\xi\right)  M_{i,3}%
			\left(  \xi,\theta\right)  x\left(  t+\theta\right)
			d\xi d\theta\right)  dt .			\nonumber
		\end{align}\normalsize
		with 
		\begin{align}
			M_{i,1}=& \Phi_{i,0}+
			\Gamma_{1,0}^{T}R_{i,1}\Gamma_{1,0}
			+ \Gamma_{2,0}^{T}R_{i,2}\Gamma_{2,0}, 										\label{Ms1}\\
			M_{i,2}\left(  \theta\right)  =& \Phi_{i,1}(\theta)+
			\Gamma_{1,0}^{T}R_{i,1}\Gamma_{1,1}\left(  \theta\right) 
			+ \Gamma_{2,0}^{T}R_{i,2}\Gamma_{2,1}\left(  \theta\right),					 \label{Ms2}\\
			M_{i,3}\left(  \xi,\theta\right)  =& \Phi_{i,2}(\xi,\theta)+
			\Gamma_{1,1}^{T}\left(  \xi\right)  R_{i,1}\Gamma_{1,1}\left(
			\theta\right) \label{Ms3}\\ &+
			\Gamma_{2,1}^{T}\left(  \xi\right)  R_{i,2}\Gamma_{2,1}\left(
			\theta\right). 			\nonumber												
		\end{align}\normalsize

		Then, the Cauchy-form solution (\ref{Sol_Cauchy}) can be rewritten as 
		\begin{equation}
			x\left(  t,\varphi\right)  =  K\left(  t\right)  \varphi\left(
			0\right)  +\int_{-h}^{0}\tilde K\left(  t,\theta\right)  \varphi\left(
			\theta\right)  d\theta,	\label{S_C}
		\end{equation} \normalsize
		where 
		\begin{align*}
			\tilde{K}(t,\theta)  & =K\left(  t-\theta-h\right)  A_{1}+\int_{-h}^{\theta}K\left(  t-\theta+\xi\right)  G\left( \xi\right)  d\xi		,	 
		\end{align*}\normalsize
		finally,  substituting (\ref{S_C}) in (\ref{J_m}), we obtain: 
        
		\begin{align}
			J_{i}\left(  u_{i},u_{-i}\right)  & =\varphi^{T}\left(  0\right)  \Pi_{i,0}%
			\varphi\left(  0\right)  +2\varphi^{T}\left(  0\right)  \int_{-h}^{0}%
			\Pi_{i,1}\left(  \theta\right)  \varphi\left(  \theta\right)  d\theta  			\nonumber \\ 
			+&\int_{-h}^{0}\int_{-h}^{0}\varphi^{T}\left(  \xi\right)  \Pi_{i,2}\left(
			\xi,\theta\right)  \varphi\left(  \theta\right)  d\xi d\theta; 						\label{J}
		\end{align}\normalsize
		with (\ref{Pi_0})--(\ref{Pi_2}).

		\tiny
		\begin{align}\label{Pi_0}
			\Pi_{i,0}  =\frac{1}{2}\left(  \int_{0}^{\infty}K^{T}\left(  t\right)
			M_{i,1}K\left(  t\right)  dt\right)
			+\int_{-h}^{0}\left(  \int_{0}^{\infty}K^{T}\left(  t\right)  M_{i,2}%
			\left(  \xi\right)  K\left(  t+\xi\right)  dtd\xi\right)											
			+\frac{1}{2}\left(  \int_{-h}^{0}\int_{-h}^{0}\int_{0}^{\infty}K^{T}\left(
			t+\xi\right)  M_{i,3}\left(  \xi,\theta\right)  K\left(
			t+\theta\right)  dtd\xi d\theta\right) 				,
		\end{align}
		\begin{align}
			\Pi_{i,1}\left(  \theta\right)   & =\frac{1}{2}\int_{0}^{\infty}K^{T}\left(
			t\right)  M_{i,1}\tilde K\left(  t,\theta\right)  dt 
			+\frac{1}{2}\int_{0}^{\infty}K^{T}\left(  t\right)  M_{i,1}\hat  K%
			(t,\theta)dt															 
			+\frac{1}{2}\int_{-h}^{0}\int_{0}^{\infty}K^{T}\left(  t\right)  M_{i,2}%
			\left(  \xi\right)  \tilde K\left(  t+\xi,\theta\right)
			dtd\xi													\label{Pi_1}\\
			& +\frac{1}{2}\int_{-h}^{0}\int_{0}^{\infty}K^{T}\left(  t\right)  M_{i,2}%
			\left(  \xi\right)  \hat  K(t+\xi,\theta)dtd\xi			 
			+\frac{1}{2}\int_{-h}^{0}\int_{0}^{\infty}K^{T}\left(  t+\xi\right)
			M_{i,2}^{T}\left(  \xi\right)  \tilde K\left(  t,\theta\right)
			dtd\xi																	  
			+\frac{1}{2}\int_{-h}^{0}\int_{0}^{\infty}K^{T}\left(  t+\xi\right)
			M_{i,2}^{T}\left(  \xi\right)  \hat  K\left(  t,\theta\right)
			dtd\xi													\nonumber\\
			& +\frac{1}{2}\int_{-h}^{0}\int_{-h}^{0}\int_{0}^{\infty}K^{T}\left(
			t+\xi\right)  M_{i,3}\left(  \xi,\theta\right)
			\tilde K\left(  t+\theta,\theta\right)  dtd\xi d\theta				
			+\frac{1}{2}\int_{-h}^{0}\int_{-h}^{0}\int_{0}^{\infty}K^{T}\left(
			t+\xi\right)  M_{i,3}\left(  \xi,\theta\right)
			\hat  K(t+\theta,\theta)dtd\xi d\theta,								\nonumber
		\end{align}
		
		\begin{align}
			\Pi_{i,2}\left(  \xi,\theta\right)   & =+\int_{-h}^{0}\int_{0}^{\infty}\hat  K^{T}(t,\xi)M_{i,2}\left(  \theta
			_{1}\right)  \hat  K(t+\xi,\theta)dtd\xi								
			+\frac{1}{2}\int_{-h}^{0}\int_{-h}^{0}\int_{0}^{\infty}\hat  K^{T}%
			(t+\xi,\xi)M_{i,3}\left(  \xi,\theta\right)
			\hat  K(t+\theta,\theta)dtd\xi d\theta					\label{Pi_2}\\
			& +\int_{0}^{\infty}\tilde K^{T}\left(
			t,\xi\right)  M_{i,1}\hat  K(t,\theta)dt 											 
			+\int_{-h}^{0}\int_{0}^{\infty}\tilde K^{T}\left(  t,\xi\right)  M_{i,2}%
			\left(  \xi\right)  \tilde K\left(  t+\xi,\theta\right)
			dtd\xi
			+\frac{1}{2}\int_{-h}^{0}\int_{-h}^{0}\int_{0}^{\infty}\tilde K^{T}\left(
			t+\xi,\xi\right)  M_{i,3}\left(  \xi,\theta\right)
			\tilde K\left(  t+\theta,\theta\right)  dtd\xi d\theta	\nonumber\\
			& +\frac{1}{2}\int_{0}^{\infty}\tilde K^{T}\left(  t,\xi\right)  M_{i,1}%
			\tilde K\left(  t,\theta\right)  dt												
			+\int_{-h}^{0}\int_{0}^{\infty}\tilde K^{T}\left(  t,\xi\right)  M_{i,2}%
			\left(  \xi\right)  \hat  K(t+\xi,\theta)dtd\xi	
			+\frac{1}{2}\int_{-h}^{0}\int_{-h}^{0}\int_{0}^{\infty}\tilde K^{T}\left(
			t+\xi,\xi\right)  M_{i,3}\left(  \xi,\theta\right)
			\hat  K(t+\theta,\theta)dtd\xi d\theta					\nonumber\\
			& +\frac{1}{2}\int_{0}^{\infty}\hat  K^{T}(t,\xi)M_{i,1}\hat  K(t,\theta)dt			
			+\int_{-h}^{0}\int_{0}^{\infty}\hat  K^{T}(t,\xi)M_{i,2}\left(  \theta
			_{1}\right)  \tilde K\left(  t+\xi,\theta\right)  dtd\xi
			+\frac{1}{2}\int_{-h}^{0}\int_{-h}^{0}\int_{0}^{\infty}\hat  K^{T}%
			(t+\xi,\xi)M_{i,3}\left(  \xi,\theta\right)  \tilde K\left(
			t+\theta,\theta\right)  dtd\xi d\theta	,\nonumber 
		\end{align}

		\normalsize	
		In (\ref{Pi_0}) we can se that it has a quadratic form and by the Schur complement \cite{haynsworth1970applications}, this matrix is positive definite if
		\begin{align} 
			M_{i,1} &>0, \label{positive cond1} \\
			M_{i,3}(\xi,\theta)-M^T_{i,2}(\xi)M_{i,1}M_{i,2}(\theta)&>0,\label{positive cond2}
		\end{align}
		also all the integrals are continuous and the symmetry on the argument of (\ref{Pi_2}) can be demonstrated by its transpose. 
		As (\ref{J}) corresponds to (\ref{V_p}) then the proposition has been demonstrated.
	\end{proof}

	Finally, Step \ref{paso3} of the method is stated in the following theorem:
	
	\begin{theorem}								\label{teorema_nash}
		Consider the system (\ref{S1}) and the performance indices (\ref{J_i}); then, the Nash strategy of the form
		  
		\begin{align}
			u_{i}^*\left(  x_t\right)  =&-\left(  R_{i,i}\right)  ^{-1}B_i^{T}\Pi_{i,0}x\left(  t\right)  \label{uop}\\
            -&\left(
			R_{i,i}\right)  ^{-1}B_i^{T}\int_{-h}^{0}\Pi_{i,1}\left(  \theta\right)
			x\left(  t+\theta\right)  d\theta;\nonumber					
		\end{align}\normalsize
		is admissible and minimizes $J_{i}(\varphi,u_i,u_{-i})$, if there exists a set of matrices $\Pi_{i,0}$, $\Pi_{i,1}(\theta)$ and $\Pi_{i,2}(\xi,\theta)$ with the properties given in the Proposition \ref{prop1}, that satisfies the following coupled partial differential equations:

		\begin{align}
			-\Phi_{i,0}   =&\Pi_{i,0}A_{0}+A_{0}^{T}\Pi_{i,0}-2\Pi_{i,0}  C_i\Pi_{i,0} -2\Pi_{i,0} C_{-i}\Pi_{-i,0}							\label{cond1}\\
			&  +  \Pi_{i,0}D_{i,i}\Pi_{i,0}   +   \Pi_{-i,0}D_{i,-i}\Pi_{-i,0}   +2\Pi_{i,1}	\left(  0\right) 						,\nonumber \\
			\frac{d\Pi_{i,1}\left(  \theta\right)  }{d\theta} =  &  A_{0}^{T}\Pi_{i,1}\left(  \theta\right)  +\Phi_{i,1}\left(  \theta\right)
			+\Pi_{i,2}\left(  0,\theta\right)						\label{cond2}\\
			&  + \Pi_{i,0}
			D_{i,i}\Pi_{i,1}\left(\theta\right)  + \Pi_{-i,0}D_{i,-i}\Pi_{-i,1}\left(  \theta\right) 											\nonumber\\
			&  -2\Pi_{i,0} C_{i}\Pi_{i,1}\left(  \theta\right)  		 				  -\Pi_{i,0}  C_{-i}\Pi_{-i,1}\left(  \theta\right)  		\nonumber \\
			& 	-  \Pi_{-i,0}C_{-i}  \Pi_{i,1}\left(  \theta\right) 		,\nonumber\\
			\frac{\partial\Pi_{i,2}\left(  \xi,\theta\right)  }{\partial \xi}+ & \frac
			{\partial\Pi_{i,2}\left(  \xi,\theta\right)  }{\partial \theta}=\Phi_{i,2}\left(\xi,\theta\right)				\label{cond3}\\
			& +   \Pi_{i,1}^{T}\left(  \xi\right) D_{i,i} \Pi_{i,1}\left(\theta\right) 	
			+   \Pi_{-i,1}^{T}\left(  \xi\right)  D_{i,-i} \Pi_{-i,1}\left( 	\theta\right)    \nonumber\\
			&  -2 \Pi_{i,1}^{T}\left(  \xi\right) C_{i} \Pi_{i,1}\left(  \theta\right)    
			-2 \Pi_{-i,1}^{T}\left(  \xi\right) C_{-i} \Pi_{i,1}\left(  \theta\right)    ,\nonumber\\
			A_{1}^{T}\Pi_{i,1}\left(  \theta\right)   = & \Pi_{i,2}\left(  -h,\theta\right)  ,\label{cond4}\\
			\Pi_{i,0}A_{1} = &  \Pi_{i,1}\left(  -h\right)  ,			\label{cond5}
		\end{align}\normalsize
		where

		\begin{align}
			C_{j}=& B_{j}\left(  R_{j,j}\right)  ^{-1}B_{j}^T   \label{def_c}\\
			D_{i,j}=&  B_{j}\left(  R_{j,j}\right)  ^{-1}	R_{i,j}\left(  R_{j,j}\right)  ^{-1}B_{j}^T ,\quad j=i,-i.  \label{def_d}
		\end{align}\normalsize
	\end{theorem}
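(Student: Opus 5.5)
We take as candidate Bellman functional for player $i$ the quadratic functional $V_i(x_t)$ of Proposition~\ref{prop1}, evaluated along the current state. We fix the opponent's strategy $u_{-i}=u^*_{-i}$ in the form (\ref{uop}) and verify the two conditions (\ref{v_c1})--(\ref{v_c2}) of Theorem~\ref{Bellman}. Applying this to each player gives the Nash property.

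\textbf{Derivative of $V_i$.} We compute $\dot V_i(x_t)$ along (\ref{S1}) for an arbitrary control $u_i$.
\begin{itemize}
\item The term $x^T(t)\Pi_{i,0}x(t)$ differentiates directly using $\dot x$.
\item For the single-integral term, we substitute $s=t+\theta$ in $2x^T(t)\int_{-h}^0\Pi_{i,1}(\theta)x(t+\theta)d\theta$. Differentiating and integrating by parts in $\theta$ produces the boundary terms $2x^T(t)\Pi_{i,1}(0)x(t)-2x^T(t)\Pi_{i,1}(-h)x(t-h)$ and $-2x^T(t)\int \frac{d\Pi_{i,1}}{d\theta}x(t+\theta)d\theta$. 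It also produces a term $2\dot x^T(t)\int\Pi_{i,1}x(t+\theta)d\theta$.
\item The double-integral term is handled the same way. It gives boundary terms at $\xi,\theta=0$ and $\xi,\theta=-h$, together with $-\int\int x^T(t+\xi)\big(\partial_\xi\Pi_{i,2}+\partial_\theta\Pi_{i,2}\big)x(t+\theta)\,d\xi d\theta$.
\end{itemize}
Adding the integrand $g_i$ of (\ref{J_i}), we obtain an expression $W_i(x_t,u_i)=\dot V_i+g_i$. This expression is quadratic in $u_i$ with Hessian $R_{i,i}>0$, and $u_i$ enters only through $u_i^TR_{i,i}u_i+2u_i^TB_i^T\big(\Pi_{i,0}x(t)+\int\Pi_{i,1}(\theta)x(t+\theta)d\theta\big)$.

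\textbf{Minimisation and the conditions.} Completing the square shows that $W_i(x_t,u_i)=W_i(x_t,u_i^*)+(u_i-u_i^*)^TR_{i,i}(u_i-u_i^*)$, with $u_i^*$ exactly (\ref{uop}). This gives (\ref{v_c2}). To obtain (\ref{v_c1}), we substitute $u_i^*$ and $u_{-i}^*$ into $W_i$. In this step the products $B_jR_{j,j}^{-1}B_j^T$ and $B_jR_{j,j}^{-1}R_{i,j}R_{j,j}^{-1}B_j^T$ appear and are renamed $C_j$ and $D_{i,j}$ as in (\ref{def_c})--(\ref{def_d}). We then collect terms by their functional structure:
\begin{itemize}
\item the coefficients of $x^T(t)(\cdot)x(t)$;
\item the coefficients of $x^T(t)(\cdot)x(t-h)$;
\item the coefficients of $x^T(t)\int(\cdot)(\theta)x(t+\theta)d\theta$;
\item the coefficients of $x^T(t-h)\int(\cdot)x(t+\theta)d\theta$;
\item the double-integral kernels.
\end{itemize}
Since the state $x_t$ is arbitrary, $W_i=0$ holds if each coefficient vanishes. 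This yields respectively (\ref{cond1}), (\ref{cond5}), (\ref{cond2}), (\ref{cond4}) and (\ref{cond3}). Symmetrisation is used where needed; this is why factors of $2$ and a single cross term such as $\Pi_{i,0}C_{-i}\Pi_{-i,0}$ appear. The assumption $\Pi_{i,2}^T(\xi,\theta)=\Pi_{i,2}(\theta,\xi)$ lets us merge the two boundary contributions of the double integral into one.

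\textbf{Value and admissibility.} Integrating $W_i(x_t,u_i)=0$ from $0$ to $T$ gives $J_i(\varphi,u_i^*,u_{-i}^*)=V_i(\varphi)-\lim V_i(x_T)$. The same computation for any $u_i$ with $u_{-i}=u^*_{-i}$ gives $J_i\ge V_i(\varphi)$, provided trajectories are stable, which requirement (iii) guarantees. Admissibility items (i), (ii) and (iv) follow because (\ref{uop}) has the linear form (\ref{u_i}) with continuous kernels, so the closed loop is (\ref{S1_lc}) and is well posed.

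\textbf{Expected main obstacle.} The hard step is asymptotic stability, needed so that $V_i(x_T)\to0$. Here $V_i$ need not be positive definite because of the cross terms. We would try to apply Theorem~\ref{Stability} with $V_i$ itself, using that $\Pi_{i,0}>0$ and that (\ref{positive cond1})--(\ref{positive cond2}) make the functional bounded below by $u(|\varphi(0)|)$. We would then use (\ref{v_c1}) to conclude $\dot V_i=-g_i\le -w(|x(t)|)$, since $\Phi_{i,0}>0$ dominates under the same Schur-type condition.

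The coefficient bookkeeping in the derivative is routine but lengthy. Making the positivity and stability argument rigorous despite the indefinite cross terms is where we expect most of the difficulty.
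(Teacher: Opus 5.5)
Your derivation of (\ref{cond1})--(\ref{cond5}) is the paper's proof. The paper also forms $H_i=\dot V_i+g_i$, obtains (\ref{uop}) from $\partial H_i/\partial u_i=0$ with $R_{i,i}>0$, substitutes both strategies, and sets to zero the same five groups of coefficients you list. Your bookkeeping is right. The term $-2\Pi_{-i,1}^T(\xi)C_{-i}\Pi_{i,1}(\theta)$ in (\ref{cond3}) is the transposed, argument-swapped form of the kernel $-2\Pi_{i,1}^T(\xi)C_{-i}\Pi_{-i,1}(\theta)$ you will actually obtain, and it gives the same double integral. Completing the square is a cleaner route to (\ref{v_c2}) than the paper's first- and second-derivative test. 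Your substitution $s=t+\theta$ also avoids differentiating $x(t+\theta)$, which the paper's formula for $\dot V_i$ does implicitly.

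The gap is the stability step, on which both the word ``admissible'' and the limit $V_i(x_T)\to 0$ for the Nash pair depend. Using $V_i$ itself in Theorem \ref{Stability} fails, for three reasons.

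\emph{No lower bound on $V_i$.} Conditions (\ref{positive cond1})--(\ref{positive cond2}) concern the closed-loop weights $M_{i,k}$, and the paper uses them only to argue $\Pi_{i,0}>0$. Nothing in the hypotheses controls $\Pi_{i,1}$ or $\Pi_{i,2}$, so there is no bound $V_i(\varphi)\ge u(|\varphi(0)|)$. For $\varphi(0)=0$ you get $V_i(\varphi)=\int\int\varphi^T(\xi)\Pi_{i,2}(\xi,\theta)\varphi(\theta)\,d\xi\,d\theta$, which has no sign in general. Recovering positivity from $V_i(\varphi)=\int_0^\infty g_i\,dt$ is circular, since that identity presupposes the stability you want.

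\emph{No bound $\dot V_i\le -w(|x(t)|)$.} At a state with $x(t)=0$, $\Phi_{i,0}$ plays no role. There $g_i$ is a double integral with kernel $\Phi_{i,2}(\xi,\theta)+\Pi_{i,1}^T(\xi)D_{i,i}\Pi_{i,1}(\theta)+\Pi_{-i,1}^T(\xi)D_{i,-i}\Pi_{-i,1}(\theta)$. This is indefinite in general, because $\Phi_{i,2}$ is an arbitrary symmetric kernel and $R_{i,-i}$ is only symmetric. A pointwise inequality like (\ref{positive cond2}) does not make a kernel a nonnegative operator.

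\emph{$\dot V=-g\le 0$ alone proves nothing when $V$ is not positive.} Already without delay, $A^TP+PA=-Q$ with $Q>0$ is solvable for many unstable $A$, and then $P$ is not positive definite.

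The paper does not attempt this route. It states explicitly that the Bellman functional need not be nonnegative because of the cross terms. It then checks closed-loop stability separately in Theorem \ref{estability}, using a different functional $x^TPx+\int x^TRx+\int\int x^TSx$ on (\ref{S1_lc_s}) and the extra hypothesis that (\ref{ARE_est}) has a solution $P>0$. So admissibility of (\ref{uop}) must come from such an additional condition, or be assumed. It cannot be extracted from (\ref{cond1})--(\ref{cond5}) through $V_i$.
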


	\begin{proof}
		The initial conditions of a time-delay system are $x_0 = \varphi$. Then, for $t \geq 0$, one can write $x_{0+t} = x_t$; this allows expressing (\ref{J}) as
		
		\begin{align*}
			V_{i}\left(  x_{t}\right)  =&x^{T}\left(  t\right)  \Pi_{i,0}x\left(
			t\right)  +2x^{T}\left(  t\right)  \int_{-h}^{0}\Pi_{i,1}\left(
			\theta\right)  x\left(  t+\theta\right)  d\theta			\\
			&+\int_{-h}^{0}\int_{-h}%
			^{0}x^{T}\left(  t+\xi\right)  \Pi_{i,2}\left(  \xi,\theta\right)  x\left(
			t+\theta\right)  d\theta d\xi,\nonumber		
		\end{align*}\normalsize
		$x_t$ is a system (\ref{S1}) trajectory, whose Hamiltonian, for each player, is 
		\begin{equation}			\label{H_h}
			H_{i}(x_{t},u_{i},u_{-i}) \triangleq \dot{V}_{i}\left(  x_{t}\right)\Bigg|_{_{\substack{(\ref{S1})
						\\u_{i}-admisible}}}+g_{i}(x_{t},u_{i},u_{-i}),
		\end{equation}  \normalsize
		where $g_{i}(x_{t},u_{i},u_{-i})$ is the argument from (\ref{J_i}) and 
		\begin{align*} 
			\dot{V}_i(x_t) &\big|_{(\ref{S1}),\,u_i\text{-admissible}} =\   
			x^T(t)\left(\Pi_{i,0}A_0 + A_0^T\Pi_{i,0}\right)x(t)+ 2x^T(t)\Pi_{i,0}A_1 x(t-h) \\
			& + 2x^T(t)\Pi_{i,0}\left(B_1 u_1(t)+B_2 u_2(t)\right) + 2x^T(t)\int_{-h}^{0} A_0^T\Pi_{i,1}(\theta) x(t+\theta) d\theta \nonumber\\
			& + 2x^T(t-h)A_1^T \int_{-h}^{0} \Pi_{i,1}(\theta)x(t+\theta) d\theta 
			  + 2x^T(t)\int_{-h}^{0} \Pi_{i,1}(\theta) \frac{\partial x(t+\theta)}{\partial t} d\theta \nonumber\\
			& + 2\int_{-h}^{0} \left(B_1 u_1(t)+B_2 u_2(t)\right)^T \Pi_{i,1}(\theta)x(t+\theta) d\theta \nonumber\\
			& + \int_{-h}^{0}\int_{-h}^{0} \frac{\partial x^T(t+\xi)}{\partial \xi} \Pi_{i,2}(\xi, \theta)x(t+\theta) d\theta d\xi \nonumber\\
			& + \int_{-h}^{0}\int_{-h}^{0} x^T(t+\xi) \Pi_{i,2}(\xi, \theta) \frac{\partial x(t+\theta)}{\partial \theta} d\theta d\xi ,  \nonumber
		\end{align*} \normalsize
        
		then 		
		\begin{align} \label{H_i}
			H_{i}&(x_{t},u_{i},u_{-i})   =\   
			x^T(t)\left(\Pi_{i,0}A_0 + A_0^T\Pi_{i,0}\right)x(t) \\
            & +2x^T(t)\Pi_{i,0}A_1 x(t-h)  + 2x^T(t)\Pi_{i,0}\left(B_1 u_1(t)+B_2 u_2(t)\right) \nonumber\\
			&+ 2x^T(t)\int_{-h}^{0} A_0^T\Pi_{i,1}(\theta) x(t+\theta) d\theta \nonumber
			  + 2x^T(t-h)A_1^T \int_{-h}^{0} \Pi_{i,1}(\theta)x(t+\theta) d\theta\nonumber\\
			& + 2x^T(t)\int_{-h}^{0} \Pi_{i,1}(\theta) \frac{\partial x(t+\theta)}{\partial t} d\theta \nonumber
			  + 2\int_{-h}^{0} \left(B_1 u_1(t)+B_2 u_2(t)\right)^T \Pi_{i,1}(\theta)x(t+\theta) d\theta \nonumber\\
			& + \int_{-h}^{0}\int_{-h}^{0} \frac{\partial x^T(t+\xi)}{\partial \xi} \Pi_{i,2}(\xi, \theta)x(t+\theta) d\theta d\xi \nonumber
			  + \int_{-h}^{0}\int_{-h}^{0} x^T(t+\xi) \Pi_{i,2}(\xi, \theta) \frac{\partial x(t+\theta)}{\partial \theta} d\theta d\xi   \nonumber\\
			& + x^{T}\left(
			t\right)  \Phi_{i,0}x\left(  t\right)  +2x^{T}\left(  t\right)  \int
			_{-h}^{0}\Phi_{i,1}\left(  \theta \right)  x\left(  t+\theta\right)  d\theta 													\nonumber\\
			& +  \int_{-h} ^{0}\int_{-h}^{0}x^{T}\left(  t+\xi\right)  \Phi_{i,2}%
			\left(  \xi,\theta\right)  x\left(  t+\theta\right)  d\theta d\xi				\nonumber	\\
			& + u_1^{T}(t)  R_{i,1} u_{1}\left(  t\right) + u_2^{T}\left(
			t\right)  R_{i,2} u_{2}\left(  t\right) dt		,  	\nonumber
		\end{align} \normalsize
		when the calculus of variations theorem, \cite{kirk2004optimal,bacsar1998dynamic}, is applied to find the minimum, 
		\begin{align*}
			\min_{u-admisible} H_{i}(x_{t},u_{i},u_{-i})=   H_{i}^*(x^*_{t},u_i^{*},u_
			{-i}^{*}) \\  
            \implies \dfrac{\partial H_{i}(x_{t},u_{i},u_{-i})}{\partial u_{i}}=0,
		\end{align*}\normalsize
		it results that 
		\begin{align*}
			\frac{\partial}{\partial u_{i}} & H_{i}(x_{t},u_{i},u_{-i})  = 2B_i^{T}\Pi_{i,0}x\left(  t\right) \\
            &+2B_i^{T}\int_{-h}^{0}\Pi_{i,1}
			\left(  \theta\right)  x\left(  t+\theta\right)  d\theta+2R_{i,i}%
			u_{i}\left(  t\right)  ,
		\end{align*} \normalsize
		has to be equal to zero, that is
		
		\begin{align*}
			2B_i^{T}\Pi_{i,0}x\left(  t\right)  +2B_i^{T}\int_{-h}^{0}\Pi_{i,1}\left(
			\theta\right)  x\left(  t+\theta\right)  d\theta+2R_{i,i}u_{i}\left(  t\right)=0,
		\end{align*}\normalsize
		solving to obtain the optimal strategy $u_{i}^*(x_t)$ 
        
		\begin{align*}
			u_i^{\ast}\left( x_t\right) &=-\left(  R_{i,i}\right)  ^{-1}B_i^{T}\Pi_{i,0}x\left(  t\right)  \\
            &-\left(
			R_{i,i}\right)  ^{-1}B_i^{T}\int_{-h}^{0}\Pi_{i,1}\left(  \theta\right)
			x\left(  t+\theta\right)  d\theta;													\nonumber			    
		\end{align*}\normalsize
		to verify that (\ref{uop}) is a minimum, the second derivative is computed, which must be positive definite. Since $R_{i,i} > 0$, a minimum has been found; therefore, (\ref{uop}) is optimal. However, to ensure that this result yields a global minimum, the Hamilton–Jacobi–Bellman equation must be satisfied, specifically when $t = 0,\ x_0 = \varphi$:
		
		\begin{equation*}
			\dot{V}_{i}(\varphi)\Bigg| _{\substack{(\ref{S1}) \\u_{i}=u_i^{\ast}
			}}+g(\varphi,u_i^*,u_{-i}^*)  =0				,				\label{HJB}
		\end{equation*}\normalsize
		then the control strategies (\ref{uop}) are substituted, taking into account the definitions (\ref{def_c}) and (\ref{def_d}) with $j=1,2$, in (\ref{H_i}) and the result is set equal to zero:
		
		\begin{align*}
			0 = &  \varphi^{T}\left(  0\right)  \left\{  \Pi_{i,0}A_{0}+A_{0}^{T}\Pi_{i,0}
			+\Phi_{i,0}-2\Pi_{i,0} \left( C_1\Pi_{1,0} + C_2\Pi_{2,0} \right) \right.  	\\
			&\quad  \left.  +  \Pi_{1,0}D_{i,1}\Pi_{1,0} + \Pi_{2,0}D_{i,2}\Pi_{2,0}  +2\Pi_{i,1}
			\left(  0\right)  \right\}  \varphi\left(  0\right) \\
			&  +2\varphi^{T}\left(  0\right)  \int_{-h}^{0}\left\{ 	\Pi_{1,0}D_{i,1}\Pi_{1,1}\left(  \theta\right) + \Pi_{2,0}D_{i,2}\Pi_{2,1}\left(  \theta\right)  \right. \\
			&\quad  +  A_{0}^{T}\Pi_{i,1}\left(  \theta\right)  +\Phi_{i,1}\left(
			\theta\right)  -\frac{d\Pi_{i,1}\left(  \theta\right)  }{d\theta}+\Pi_{i,2}
			\left(  0,\theta\right)	\\
			& \quad  -\Pi_{i,0} \left( C_1 \Pi_{1,1}\left(  \theta\right) + C_2 \Pi_{2,1}\left(  \theta\right)  \right)\\
            & \quad \left.  -\left( \Pi_{1,0}C_1 + \Pi_{2,0}C_2 \right)  \Pi_{i,1}\left( \theta\right)  \right\}  \varphi\left(  \theta\right) d\theta	\\
			&  +\int_{-h}^{0}\int_{-h}^{0}\varphi^{T}\left(  \xi\right)  \left\{  \Phi_{i,2}\left(  \xi,\theta\right) \right. \\
            & \quad+  \Pi_{1,1}^{T}\left(
			\xi\right) D_{i,1}\Pi_{1,1}\left(  \theta\right) + \Pi_{2,1}^{T}\left(
			\xi\right) D_{i,2}\Pi_{2,1}\left(  \theta\right)  		\nonumber\\
			& \quad  -2 \left(  \Pi_{1,1}^{T}\left(  \xi\right) C_1 + \Pi_{2,1}^{T}\left(  \xi\right) C_2 \right)  \Pi_{i,1}\left(  \theta\right) \\
            & \quad\left. -\frac
			{\partial\Pi_{i,2}\left(  \xi,\theta\right)  }{\partial \xi}-\frac{\partial\Pi_{i,2}
				\left(  \xi,\theta\right)  }{\partial \theta}\right\}  \varphi\left(  \theta\right)	d\xi d\theta								\nonumber\\
			&  +2\varphi^{T}\left(  -h\right)  \int_{-h}^{0}\left\{  A_{1}^{T}\Pi_{i,1}
			\left(  \theta\right)  -\Pi_{i,2}\left(  -h,\theta\right)  \right\}  \varphi\left(
			\theta\right)  d\theta											\nonumber\\
			&  +2\varphi^{T}\left(  0\right)  \left\{  \Pi_{i,0}A_{1}-\Pi_{i,1}\left(
			-h\right)  \right\}  \varphi\left( -h\right) 					,\nonumber
		\end{align*}\normalsize
		since this must hold for any $\varphi$, the only possibility is that the expression inside the braces is zero, or that conditions (\ref{cond1})--(\ref{cond5}) of Theorem \ref{teorema_nash} are satisfied; therefore, the theorem is proven.
	\end{proof}
	Please note that the i-smooth calculus \cite{kim2015smooth} was not used here. Instead, the classical constructive procedure proposed in \cite{krasovskii1962analytic} for obtaining the time derivative of functionals in time-delay systems is employed.
	The equations resulting from Theorem \ref{teorema_nash} form a system of five equations per player, that is, a total of ten simultaneous equations, whose analytical solution is not straightforward to obtain. For this reason, in this proposal, we use the approach presented in \cite{kim2000linear} to decouple the set of equations \eqref{cond1}--\eqref{cond5} and obtain explicit formulas for the closed-loop Nash strategies. This is given in the following Theorem.
	
	\begin{theorem} \label{solution}
		Let the system be (\ref{S1}), if the matrices from (\ref{J_i}) are 
		\begin{align}
			\Phi_{i,0} &  =  N_{i}+2\Pi_{i,0}C_{-i}\Pi_{-i,0}-\Pi_{-i,0}D_{i,-i}\Pi
			_{-i,0} \label{phi_0}\\
            & -2\Pi_{i,1}\left(  0\right)  ,\nonumber\\
			\Phi_{i,1}\left(  \theta\right)  &  = -\Pi_{i,2}\left(  0,\theta\right)
			-\Pi_{-i,0}D_{i,-i}\Pi_{-i,1}\left(  \theta\right)   \label{phi_1}\\
			& +\Pi_{i,0}C_{-i}\Pi
			_{-i,1}\left(  \theta\right) +\Pi_{-i,0}C_{-i}\Pi_{i,1}\left(  \theta\right)
			,\nonumber\\
			\Phi_{i,2}\left(  \xi,\theta\right)  &  = -\Pi_{i,1}^{T}\left(  \xi\right)
			\left[  D_{i,i}-2C_{i}\right]  \Pi_{i,1}\left(  \theta\right)   \label{phi_2}\\
			& -\Pi_{-i,1}%
			^{T}\left(  \xi\right)  D_{i,-i}\Pi_{-i,1}\left(  \theta\right) +2\Pi
			_{-i,1}^{T}\left(  \xi\right)  C_{-i}\Pi_{i,1}\left(  \theta\right) , \nonumber
		\end{align}\normalsize
		where $N_{i}$ are positive definite parametric free matrices in $\mathbb{R}^{n\times n}$, then $\Pi_{i,0}$ is solution to the Algebraic Riccati Equation (ARE) 
		\begin{align}\label{solare}
			\Pi_{i,0}A_{0}+A_{0}^{T}\Pi_{i,0}+\Pi_{i,0}\left( D_{i,i}-2C_{i}\right)  \Pi_{i,0}=-N_{i},
		\end{align}	\normalsize
		and the (\ref{cond2})--(\ref{cond5}) equations have solution as follows 
		\begin{align} \label{solode}
			\Pi_{i,1}(\theta)=e^{F_i(\theta+h)}\,A_1\Pi_{i,0} ,\qquad \theta\in[-h,0],
		\end{align}
		\begin{align}\label{solpde}
			\Pi_{i,2}(\xi,\theta)=
			\begin{cases}
				\Psi_{i}(\xi)\,\Pi_{i,1}(\theta), & (\xi,\theta)\in\Omega_1,\\[4pt]
				\Pi_{i,1}^T(\xi)\,\Psi_i^T(\theta), & (\xi,\theta)\in\Omega_2,
			\end{cases}
		\end{align}\normalsize
		where  $F_i:= A_{0}^{T}+\Pi 
		_{i,0}[D_{i,i}-2C_{i}]$, $\Psi_i(\xi)=A_1^{T}
			e^{- F_i(\xi+h)}$, $\Psi(-h)=A_1^T$ y 	
		$\Omega_1=\{(\xi,\theta)\in[-h,0]\times[-h,0]:\, \xi-\theta<0\},\
		\Omega_2=\{(\xi,\theta)\in[-h,0]\times[-h,0]:\, \xi-\theta>0\},$ moreover, the closed loop Nash strategies of (\ref{S1}) can be constructed as in (\ref{uop}). 	
	\end{theorem}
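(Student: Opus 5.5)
The plan is to substitute the prescribed weights (\ref{phi_0})--(\ref{phi_2}) into the coupled system (\ref{cond1})--(\ref{cond5}) of Theorem \ref{teorema_nash}. Then I would check that the cross-coupling terms cancel, leaving a decoupled system for each player. Finally I would verify that the stated formulas (\ref{solare})--(\ref{solpde}) solve the decoupled system.

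\emph{Decoupling (\ref{cond1}).} Inserting (\ref{phi_0}) into (\ref{cond1}), the terms $2\Pi_{i,0}C_{-i}\Pi_{-i,0}$, $\Pi_{-i,0}D_{i,-i}\Pi_{-i,0}$ and $2\Pi_{i,1}(0)$ cancel against their counterparts in $\Phi_{i,0}$. What remains is exactly the ARE (\ref{solare}). Its solvability, with $\Pi_{i,0}=\Pi_{i,0}^T>0$, is taken as given for the free matrix $N_i>0$, as in \cite{kim2000linear}.

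\emph{Decoupling (\ref{cond2}) and (\ref{cond3}).} Inserting (\ref{phi_1}) into (\ref{cond2}), the terms $\Pi_{i,2}(0,\theta)$, $\Pi_{-i,0}D_{i,-i}\Pi_{-i,1}$, $\Pi_{i,0}C_{-i}\Pi_{-i,1}$ and $\Pi_{-i,0}C_{-i}\Pi_{i,1}$ cancel. This leaves the linear ODE
\begin{align*}
\frac{d\Pi_{i,1}(\theta)}{d\theta}=\left(A_0^T+\Pi_{i,0}(D_{i,i}-2C_i)\right)\Pi_{i,1}(\theta)=F_i\Pi_{i,1}(\theta).
\end{align*}
Its solution is $e^{F_i(\theta+h)}\Pi_{i,1}(-h)$, and the boundary condition (\ref{cond5}) fixes $\Pi_{i,1}(-h)$. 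This gives (\ref{solode}), modulo the ordering $\Pi_{i,0}A_1$ versus $A_1\Pi_{i,0}$, which I would have to reconcile with (\ref{cond5}). Similarly, inserting (\ref{phi_2}) into (\ref{cond3}) cancels every right-hand term. The result is the homogeneous transport equation $\partial_\xi\Pi_{i,2}+\partial_\theta\Pi_{i,2}=0$, so $\Pi_{i,2}$ is constant along the characteristics $\xi-\theta=\text{const}$.

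\emph{Solving the transport equation.} On $\Omega_1$ ($\xi<\theta$) each characteristic meets the boundary $\xi=-h$, where (\ref{cond4}) prescribes $\Pi_{i,2}(-h,\theta)=A_1^T\Pi_{i,1}(\theta)$. Propagating along characteristics gives
\begin{align*}
\Pi_{i,2}(\xi,\theta)=A_1^T\Pi_{i,1}(\theta-\xi-h).
\end{align*}
Using the exponential form (\ref{solode}), this equals $A_1^Te^{-F_i(\xi+h)}\Pi_{i,1}(\theta)=\Psi_i(\xi)\Pi_{i,1}(\theta)$. On $\Omega_2$ I would define $\Pi_{i,2}$ by the symmetry $\Pi_{i,2}(\xi,\theta)=\Pi_{i,2}^T(\theta,\xi)$ required in Proposition \ref{prop1}. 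This yields the second branch of (\ref{solpde}), which again satisfies the transport equation, since the symmetric reflection preserves characteristics.

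\emph{Main obstacle.} The delicate step is consistency on the diagonal $\xi=\theta$ and at the corners. There I must verify that the two branches agree, i.e.\ $\Psi_i(\theta)\Pi_{i,1}(\theta)=\Pi_{i,1}^T(\theta)\Psi_i^T(\theta)$, so that $\Pi_{i,2}$ is continuous as Proposition \ref{prop1} demands. I would also check that the boundary condition (\ref{cond4}) holds on the part of $\xi=-h$ lying in $\Omega_1$. I would first attempt the diagonal identity by writing out $A_1^Te^{-F_i(\theta+h)}e^{F_i(\theta+h)}A_1\Pi_{i,0}=A_1^TA_1\Pi_{i,0}$ and asking when it is symmetric. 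If this fails in general, I would accept a discontinuity on the measure-zero diagonal, where the double integrals are unaffected.

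Once this is settled, all of (\ref{cond1})--(\ref{cond5}) hold. Theorem \ref{teorema_nash} then yields the Nash strategies (\ref{uop}) built from these $\Pi_{i,0}$ and $\Pi_{i,1}$.
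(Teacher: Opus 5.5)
Your route is the paper's: substitute \eqref{phi_0}--\eqref{phi_2} into \eqref{cond1}--\eqref{cond3}, check that every cross term cancels, and solve the resulting ARE, the linear ODE $\Pi_{i,1}'=F_i\Pi_{i,1}$ and the homogeneous transport equation with boundary data \eqref{cond4}--\eqref{cond5}. Your characteristic argument on $\Omega_1$, $\Pi_{i,2}(\xi,\theta)=A_1^T\Pi_{i,1}(\theta-\xi-h)=\Psi_i(\xi)\Pi_{i,1}(\theta)$, is cleaner than the paper's version. The paper posits the separable form $\Psi_i(\xi)\Pi_{i,1}(\theta)$ and then ``cancels'' $\Pi_{i,1}(\theta)$, whereas you derive the formula and also get uniqueness.

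The gap is the step you leave open, and your fallback for it does not work. The two issues you flag are the same issue. Your ODE step gives $\Pi_{i,1}(\theta)=e^{F_i(\theta+h)}\Pi_{i,1}(-h)$, and \eqref{cond5} forces $\Pi_{i,1}(-h)=\Pi_{i,0}A_1$. So the formula that actually solves the system is $\Pi_{i,1}(\theta)=e^{F_i(\theta+h)}\Pi_{i,0}A_1$. The ordering $A_1\Pi_{i,0}$ in \eqref{solode} is a slip in the statement; the paper's own proof takes the initial condition from \eqref{cond5}. The slip is harmless only when $A_1$ and $\Pi_{i,0}$ commute, as in the scalar experiment. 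There is nothing to ``reconcile'': with $A_1\Pi_{i,0}$, condition \eqref{cond5} simply fails.

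With the correct ordering, the diagonal $\xi=\theta$ is the characteristic issuing from the corner $(-h,-h)$. It therefore carries the constant value $A_1^T\Pi_{i,1}(-h)=A_1^T\Pi_{i,0}A_1$, which is symmetric because $\Pi_{i,0}$ is. Hence the two branches of \eqref{solpde} agree on the diagonal, and $\Pi_{i,2}$ is continuous with $\Pi_{i,2}^T(\xi,\theta)=\Pi_{i,2}(\theta,\xi)$. Also \eqref{cond4} holds on the whole edge $\xi=-h$, since that edge lies in the closure of $\Omega_1$.

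Tolerating a jump on the measure-zero diagonal cannot rescue the $A_1\Pi_{i,0}$ version. The failure of \eqref{cond5} is not a measure-zero phenomenon, and Theorem~\ref{teorema_nash} requires $\Pi_{i,2}$ to be continuous and symmetric as in Proposition~\ref{prop1}. Replace $A_1\Pi_{i,0}$ by $\Pi_{i,0}A_1$ throughout, and your argument closes.
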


	\begin{proof}
		If the  terms in (\ref{cond1})–(\ref{cond3}) are chosen as (\ref{phi_0})–(\ref{phi_2}), then, upon substitution, one obtains 
		\begin{align}
			\Pi_{i,0}A_{0}+A_{0}^{T}\Pi_{i,0}+\Pi_{i,0}\left( D_{i,i}-2C_{i}\right)  \Pi_{i,0}	    &=-N_{i}				,		\label{algric}\\
			\left[  A_{0}^{T}+\Pi
			_{i,0}(D_{i,i}-2C_{i})\right]  \Pi_{i,1}\left(  \theta\right) & = \frac{d\Pi_{i,1}\left(  \theta\right)  }{d\theta}, \label{oderic}\\
			\frac{\partial\Pi_{i,2}\left(  v,\theta\right)  }{\partial v}+\frac{\partial
				\Pi_{i,2}\left(  v,\theta\right)  }{\partial \theta}  &  =0		,	\label{pderic}\\
			A_{1}^{T}\Pi_{i,1}\left(  \theta\right)   &  =\Pi_{i,2}\left(  -h,\theta\right), \label{frontric1}\\
			\Pi_{i,0}A_{1}  &  =\Pi_{i,1}\left(  -h\right)		.	\label{frontric2}\
		\end{align}\normalsize
		
		This provides a direct solution system for each player, since (\ref{algric}) is an ARE; whenever the pair $[A, D_{i,i} - 2C_i]$ is stabilizable, $\Pi_{i,0}$ exists \cite{kalman1960contributions}. Now, if one substitutes this solution $\Pi_{i,0}$ into the equation given by \eqref{oderic}, this equation is a first-order matrix differential equation whose solution is known, just as in (\ref{solode}), with initial condition given by (\ref{frontric2}).

		Let us verify that $\Pi_{i,2}(\xi,\theta)$ satisfies the transport equation 
		\begin{align}\label{ec_transp}
			\frac{\partial \Pi_{i,2}(\xi,\theta)}{\partial \xi} +
			\frac{\partial \Pi_{i,2}(\xi,\theta)}{\partial \theta}=0.
		\end{align}\normalsize
		
		Suppose that in the domain  $\Omega_1$ the following holds:
		$\Pi_{i,2}(\xi,\theta)=\Psi_i(\xi)\Pi_{i,1}(\theta)$,
		upon substitution on (\ref{ec_transp}) it results 
		\begin{align*}
			\frac{d\Psi_i(\xi)}{d\xi}\Pi_{i,1}(\theta)
			+
			\Psi_i(\xi)\frac{d\Pi_{i,1}(\theta)}{d\theta}=0,
		\end{align*}\normalsize
		substituting (\ref{oderic}) into the last equation:
				
		\begin{align*}
			\frac{d\Psi_i(\xi)}{d\xi}\Pi_{i,1}(\theta)
			+
			\Psi_i(\xi)
			F_i \Pi_{i,1}\left(  \theta\right)=0,
		\end{align*}\normalsize
		As $\Pi_{i,1}(\theta)\neq \textbf{0}$, then 
		
		\begin{align*}
			\frac{d\Psi_i(\xi)}{d\xi}
			+\Psi_i(\xi) F_i 	=0,	
		\end{align*}\normalsize
		this is a first order matrix differential equation whose solution on $[-h,0]$ is
				
		\begin{align*}
			\Psi_i(\xi,\Psi_i(-h))
			=\Psi_i(-h)e^{-F_i(\xi+h)}.
		\end{align*}\normalsize
		
		To obtain the initial conditions of this system, lets use (\ref{frontric1}); this yields that $\Psi_i(-h)=A_1^{T}$, therefore
		
		\begin{align} \label{dem3}
			\Psi_i(\xi)=A_1^{T}
			e^{-F_i(\xi+h)},
		\end{align}\normalsize
		consequently, in the domain $\Omega_1$ it holds that $ \Pi_{i,2}(\xi,\theta)=\Psi_i(\xi)\Pi_{i,1}(\theta).
		$
		
		Now, for the domain $\Omega_2$, the same procedure is followed, which results in the transpose of (\ref{dem3}); therefore, the proposed form for  $\Pi_{i,2}(\xi,\theta)$ satisfies the differential equation. Finally, the symmetry property follows directly by taking the transpose of (\ref{pderic}) \cite{kim2000linear}. This completes the proof of the theorem.
	\end{proof}

    This result can be expressed as a type of a classical Nash equilibrium strategy as in Theorem \ref{Nash non delay}, however, do not forget that this is for a state delay system, described in the next Corollary.

	\begin{corollary}
		If (\ref{phi_0}) is chosen as 
		\begin{align}
			\Phi_{i,0}   = N_{i} -2\Pi_{i,1}\left(  0\right)  ,\label{phi_0J}
		\end{align}\normalsize
		and let (\ref{phi_1}), (\ref{phi_2}) as they are, 
		then $\Pi_{i,0}$ is the solution of 
		\begin{align}\label{solnash}
			-N_i  =&\Pi_{i,0}(A_{0}-C_{-i}\Pi_{-i,0})+(A_{0}-C_{-i}\Pi_{-i,0})^{T}\Pi_{i,0} \\
            &+  \Pi_{i,0}(D_{i,i}-2C_i)\Pi_{i,0}+   \Pi_{-i,0}D_{i,-i}\Pi_{-i,0}   ,	\nonumber	 
		\end{align}	\normalsize
		which results in the classical solution of a Nash equilibrium, and the solutions (\ref{solode}) and (\ref{solpde}) abide.
	\end{corollary}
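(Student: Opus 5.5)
The plan is to repeat the substitution argument from the proof of Theorem \ref{solution}, changing only the first equation. Insert the reduced choice (\ref{phi_0J}) into (\ref{cond1}) of Theorem \ref{teorema_nash}, and keep (\ref{phi_1}) and (\ref{phi_2}) in (\ref{cond2}) and (\ref{cond3}). For the first equation, the term $+2\Pi_{i,1}(0)$ in (\ref{cond1}) cancels the $-2\Pi_{i,1}(0)$ of (\ref{phi_0J}). This leaves
\begin{align*}
-N_i=&\Pi_{i,0}A_0+A_0^T\Pi_{i,0}-2\Pi_{i,0}C_i\Pi_{i,0}-2\Pi_{i,0}C_{-i}\Pi_{-i,0}\\
&+\Pi_{i,0}D_{i,i}\Pi_{i,0}+\Pi_{-i,0}D_{i,-i}\Pi_{-i,0}.
\end{align*}
The next step is to rewrite the coupling term $-2\Pi_{i,0}C_{-i}\Pi_{-i,0}$ in the symmetric form $-\Pi_{i,0}C_{-i}\Pi_{-i,0}-\Pi_{-i,0}C_{-i}\Pi_{i,0}$. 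Both $\Pi_{i,0}$ and $\Pi_{-i,0}$ are symmetric by Proposition \ref{prop1}, and $C_{-i}$ is symmetric by (\ref{def_c}). So the quadratic form $\varphi^T(0)(\cdot)\varphi(0)$ in the Bellman identity only involves the symmetric part of this term, and that symmetric part is exactly the rewritten expression. Absorbing it into the drift gives $\Pi_{i,0}(A_0-C_{-i}\Pi_{-i,0})+(A_0-C_{-i}\Pi_{-i,0})^T\Pi_{i,0}$, which is (\ref{solnash}).

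The equation (\ref{solnash}) is a coupled pair, $i=1,2$. Its structure matches the equations of Theorem \ref{Nash non delay}, with $A$ replaced by $A_0$, $S_j$ by $C_j$, and $S_{j,i}$ by $D_{i,j}$. The modified quadratic weight $D_{i,i}-2C_i$ plays the role of $-S_i$. This justifies calling it the classical Nash solution. I will assume a symmetric solution pair exists, as in Theorem \ref{Nash non delay}.

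The remaining equations are unchanged. Equations (\ref{cond2})--(\ref{cond5}) with (\ref{phi_1}) and (\ref{phi_2}) reduce exactly as before to (\ref{oderic})--(\ref{frontric2}), because the choice of $\Phi_{i,0}$ does not enter them. The ODE (\ref{oderic}), with $F_i=A_0^T+\Pi_{i,0}[D_{i,i}-2C_i]$ and boundary condition (\ref{frontric2}), again yields (\ref{solode}). The transport equation (\ref{pderic}) with (\ref{frontric1}) again yields (\ref{solpde}). The verification on $\Omega_1$ and $\Omega_2$ is word for word that of Theorem \ref{solution}, now using the $\Pi_{i,0}$ from (\ref{solnash}).

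The main obstacle is the symmetrization step. (\ref{cond1}) as written is not symmetric, and Riccati equations are only meaningful as symmetric identities. I will justify the rewrite by noting that (\ref{cond1}) was derived from a quadratic form in $\varphi(0)$, so only its symmetric part is actually imposed. A secondary concern is the sign bookkeeping in (\ref{cond2}). It contains $-\Pi_{i,0}C_{-i}\Pi_{-i,1}$, which (\ref{phi_1}) cancels with $+\Pi_{i,0}C_{-i}\Pi_{-i,1}$. Since (\ref{phi_1}) is kept unchanged, that cancellation still produces (\ref{oderic}); I will check this explicitly. I will also note that $F_i$ in (\ref{solode}) still uses only $\Pi_{i,0}$, not the closed-loop matrix $A_0-C_{-i}\Pi_{-i,0}$.
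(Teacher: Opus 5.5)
Your proposal is correct and follows the route the paper intends. The paper gives the corollary no separate proof: it is the same substitution into (\ref{cond1})--(\ref{cond5}) as in the proof of Theorem \ref{solution}, with only the first equation changing. Your explicit symmetrization of $-2\Pi_{i,0}C_{-i}\Pi_{-i,0}$ tightens a step the paper leaves implicit. You can also note that $D_{i,i}=C_i$ by (\ref{def_d}), so $D_{i,i}-2C_i=-C_i$ is exactly $-S_i$, and the match with Theorem \ref{Nash non delay} is literal rather than an analogy.

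One caveat comes from the paper, not from your argument. The ODE (\ref{oderic}) with boundary condition (\ref{frontric2}) actually yields $\Pi_{i,1}(\theta)=e^{F_i(\theta+h)}\Pi_{i,0}A_1$. This agrees with (\ref{solode}) only when $A_1$ and $\Pi_{i,0}$ commute, as in the scalar experiment, so your claim that it "again yields (\ref{solode})" should carry that qualifier.
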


	The closed loop stability of system (\ref{S1}), is not guaranteed with the Bellman equation, since it is not obvious to demonstrate that the Bellman functional is  non-negative, due to the cross terms; therefore, the closed-loop stability system, (\ref{S1_lc}), must be verified to conclude that the Nash strategies are admissible. To this end, let us follow the stability procedure presented  in Theorem 7.2.1 of Kolamanovskii exposed in \cite{kolmanovskii2012applied}.
	
	Consider the change of variable $s = t - \theta$ in the closed loop system given by (\ref{S1_lc}), so that
	
	\begin{align}
		\dot{x}\left(  t\right)  =\tilde{A}_{0}x\left(  t\right)  +A_{1}x\left(
		t-h\right)  +\int_{t-h}^{t}G\left(  s-t\right)  x\left(  s\right)
		ds,																		\label{S1_lc_s}
	\end{align}\normalsize
	Once the closed-loop system has been rewritten in this form, sufficient conditions for its stability are established in the following theorem.
	\begin{theorem}\label{estability}
		Let 
		\begin{align} \label{V_est}
			V(x_t)=x^T(t)Px(t)+\int_{t-h}^{t}x^T(s)Rx(s)\\
            +\int_0^h\int_{t-s}^tx^T
			(v)Sx(v)dvds,\nonumber
		\end{align}\normalsize
		with $P,R,S > 0$ and symmetric matrices. $V(\cdot)$ is a mapping \(\mathbb{R}^n \to \mathbb{R}\) Lyapunov-Krasovskii functional candidate for the closed-loop system (\ref{S1_lc_s}) and let $\alpha_1(w),\ \alpha_2(w):\mathbb{R}^+\to\mathbb{R}^+$, continuous nondecreasing functions, positive for $w > 0$, $\alpha_1(0)=\alpha_2(0)=0$, be 
		\begin{align}
			\alpha_1(\|x(t)\|)= & x^T(t)Px(t), \label{lowerbound}\\
			\alpha_2(\|x_t\|) = & \| x_t \|_h^2(\lambda_{max}(P)+h\lambda_{max}(R)+h^2\lambda_{max}(S)), \label{upperbound}
		\end{align}  \normalsize
		such that 
		\begin{align*}
			\alpha_1(\|x(t)\|) \le  V(x_t) \le \alpha_2(\|x_t\|), \quad \forall x \in \mathbb{R}^n,
		\end{align*}\normalsize
		then \(V(x_t)\) is positive definite and radially unbounded in the functional space $PC([-h,0],\mathbb{R}^n) $, even more, if there exists solution to the ARE 
		\begin{align} \label{ARE_est}
			-Q=&\tilde{A}_{0}^{T}P+P\tilde{A}_{0}+Sh+R\\
            &+P\left[  A_{1}R^{-1}A_{1}^{T}+\int
			_{-h}^{0}G^{T}\left(  v\right)  S^{-1}G\left(  v\right)  dv\right]  P\nonumber
		\end{align}\normalsize
		then the origin of the system is asymptotically stable as per Theorem \ref{Stability}, which implies that the Nash strategies (\ref{uop}) are stabilizing.
	\end{theorem}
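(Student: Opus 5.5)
The plan is to verify the two hypotheses of Theorem \ref{Stability} for the functional (\ref{V_est}): first the sandwich bounds, then the negativity of the derivative along (\ref{S1_lc_s}) using the ARE (\ref{ARE_est}).

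\textbf{Bounds on $V$.} Since $R,S>0$, the two integral terms of (\ref{V_est}) are nonnegative, so $V(x_t)\ge x^T(t)Px(t)\ge\lambda_{\min}(P)|x(t)|^2$. This gives the lower bound (\ref{lowerbound}), and it is radially unbounded because $P>0$. For the upper bound, each integrand is bounded by its largest eigenvalue times $\|x_t\|_h^2$:
\begin{align*}
V(x_t)\le \|x_t\|_h^2\big(\lambda_{\max}(P)+h\lambda_{\max}(R)+h^2\lambda_{\max}(S)\big),
\end{align*}
because the first integral runs over a window of length $h$ and the double integral over a region of area at most $h^2$. This is (\ref{upperbound}).

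\textbf{Derivative of $V$.} Along (\ref{S1_lc_s}) I would compute
\begin{align*}
\dot V = 2x^T(t)P\dot x(t) + x^T(t)Rx(t) - x^T(t-h)Rx(t-h) + h\,x^T(t)Sx(t) - \int_{t-h}^{t} x^T(s)Sx(s)\,ds .
\end{align*}
The last two terms come from differentiating $\int_0^h\int_{t-s}^t x^TSx\,dv\,ds$, which gives $\int_0^h\big(x^T(t)Sx(t)-x^T(t-s)Sx(t-s)\big)ds$. I then substitute $\dot x$ and bound the cross terms by completing squares:
\begin{align*}
2x^TPA_1x(t-h) &\le x^TPA_1R^{-1}A_1^TPx + x^T(t-h)Rx(t-h),\\
2x^TP\int_{t-h}^t G(s-t)x(s)\,ds &\le \int_{t-h}^t\big(x^TPG^T S^{-1}G Px + x^T(s)Sx(s)\big)ds .
\end{align*}
Here I write the kernel so that it matches the term $PG^TS^{-1}GP$ appearing in (\ref{ARE_est}), transposing $G$ as needed. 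The negative delayed terms cancel exactly. After the change of variable $v=s-t$, what remains is $\dot V\le x^T(t)\big[\tilde A_0^TP+P\tilde A_0+hS+R+P(\cdots)P\big]x(t)=-x^T(t)Qx(t)$, where the last equality is (\ref{ARE_est}).

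\textbf{Conclusion and main obstacle.} If $Q>0$, then $w(s)=\lambda_{\min}(Q)s^2$ is positive for $s>0$. Theorem \ref{Stability} then gives global uniform asymptotic stability, so the strategies (\ref{uop}) satisfy admissibility condition (iii). The main obstacle is the integral cross term. Young's inequality applied pointwise requires matching the matrix $G(v)$ with its transpose so that the result is $PG^TS^{-1}GP$ and not $PGS^{-1}G^TP$. The dimensions also need care: $G$ is $n\times n$ here, so the product is well defined, but the order of factors must agree with (\ref{ARE_est}). I would also have to state the implicit requirement $Q>0$ explicitly, since the theorem statement does not say it.
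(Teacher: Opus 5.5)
Your overall route is the paper's: the sandwich bounds, Leibniz differentiation of the double integral, completing squares on the $A_1$ and $G$ cross terms, and then (\ref{ARE_est}) to conclude $\dot V\le -x^T(t)Qx(t)$. Your bounds, your derivative and your $A_1$ estimate are fine. The gap is exactly at the step you flagged, and it cannot be closed the way you propose. The closed loop (\ref{S1_lc_s}) fixes the cross term as $2x^T(t)PG(s-t)x(s)$, with $G=B_1\Gamma_{1,1}+B_2\Gamma_{2,1}$, which is generally not symmetric. So there is no freedom to ``transpose $G$ as needed''. Young's inequality with $a=G^T(s-t)Px(t)$ and $b=x(s)$ gives
\begin{align*}
2x^T(t)PG(s-t)x(s)\le x^T(t)PG(s-t)S^{-1}G^T(s-t)Px(t)+x^T(s)Sx(s).
\end{align*}
This follows the same pattern as your correct $PA_1R^{-1}A_1^TP$, and it is sharp, since $\sup_b\{2a^Tb-b^TSb\}=a^TS^{-1}a$. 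The inequality you display, with $PG^TS^{-1}GP$, is false for $n\ge 2$. Take $P=S=I$, $G\equiv\begin{pmatrix}0&1\\0&0\end{pmatrix}$, $x(t)=e_1$ and $x(s)=e_2$ on $[t-h,t)$: the left side is $2h$ and the right side is $h$. With (\ref{ARE_est}) exactly as written, your computation therefore only gives
\begin{align*}
\dot V\le -x^T(t)Qx(t)+x^T(t)P\Big[\int_{-h}^0\big(G(v)S^{-1}G^T(v)-G^T(v)S^{-1}G(v)\big)dv\Big]Px(t),
\end{align*}
and the second term has no sign.

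The paper's proof has the same slip. Its completed square $[G(s-t)Px(t)-Sx(s)]^TS^{-1}[\,\cdot\,]$ expands to the cross term $2x^T(t)PG^T(s-t)x(s)$, not the $2x^T(t)PG(s-t)x(s)$ that the dynamics produce. Its expression for $\dot V$ is therefore an identity only when $G$ is symmetric. You located the real difficulty, but the repair has to go the other way: change the statement, not the inequality. Replace $\int_{-h}^0G^T(v)S^{-1}G(v)\,dv$ in (\ref{ARE_est}) with $\int_{-h}^0G(v)S^{-1}G^T(v)\,dv$. Your argument then goes through verbatim, with $Q>0$ assumed as you note; the paper adds that assumption only in the last line of its proof. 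For the scalar plant of Section~\ref{apli} the two expressions coincide, so the numerical stability check there is unaffected.
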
 
	
	\begin{proof}
		The bounds for the functional $V(.)$ can be calculated as (\ref{lowerbound})--(\ref{upperbound}) by direct calculations and appropriate majorizations;  so there it only is required to be shown that the derivative of the functional (\ref{V_est}) is nonnegative; therefore, it must be differentiated using Leibniz’s rule for integrals:  
		\begin{align*}
			\dot{V}(\cdot)=&\dot{x}^T(t)Px(t)+{x}^T(t)P\dot x(t)\\
			&+{x}^T(t)Rx(t)-{x}^T(t-h)Rx(t-h)+{x}^T(t)Shx(t)\nonumber\\
            &-\int_0^h{x}^T(t-s_1)Sx(t-s_1)ds_1	,	\nonumber
		\end{align*}\normalsize
		by performing a change of variable $s=t-s_1$ in the integral term,
		and also substituting (\ref{S1_lc_s}), it results
		
		\begin{align*}
			\dot{V} (\cdot) & =x^{T}\left(  t\right)  \left[  \tilde{A}_{0}^{T}P+P\tilde{A}%
			_{0}+Sh+R+PA_{1}R^{-1}A_{1}^{T}P\right.  \\
			& \left.  +P\int_{t-h}^{t}G^{T}\left(  s-t\right)  S^{-1}G\left(  s-t\right)
			dsP\right]  x(t)  \nonumber\\
			& -\left[  A_{1}^{T}Px(t)-Rx(t-h)\right]  ^{T}R^{-1}\left[  A_{1}%
			^{T}Px(t)-Rx(t-h)\right]  \nonumber\\
			& -\int_{t-h}^{t}\left[  G\left(  s-t\right)  Px\left(  t\right)  -S{x}%
			^{T}(s)\right]  ^{T}S^{-1}\left[  G\left(  s-t\right)  Px\left(  t\right)
			-S{x}^{T}(s)\right]  ds,\nonumber
		\end{align*}\normalsize
		with the variable change $v = s - t$, it is direct to see that:
        
		\begin{align*}
			\dot{V}(\cdot)\leq &x^{T}\left(  t\right)  \left[  \tilde{A}_{0}^{T}
			P+P\tilde{A}_{0}+Sh+R+PA_{1}R^{-1}A_{1}^{T}P	\right.		\\
			&+\left. P\int_{-h}^{0}G^{T}\left(  v\right)  S^{-1}G\left(  v\right)  dvP\right]  x(t).			\nonumber
		\end{align*}\normalsize
		Since the last term integrand is known, it can be computed and results in a constant matrix, it follows that it suffices for the ARE (\ref{ARE_est}) to admit a solution $P > 0$, with $P = P^{T}$, for $Q > 0$. Therefore, the theorem has been proven.
	\end{proof}
    \section{Experimental results}\label{apli}
	To validate the approach proposed here, a prototype was built that simulates a food dehydration system. It regulates temperature using two controllers/players (i.e., two heating elements) and a state feedback that accounts for the delayed term; this is a scalar system. The prototype and its identification via Recursive Least Squares (RLS) are described.
    The system can be operated using industrial REX–C100 Proportional-Integral-Derivative (PID) controllers or with an ESP32 and an AC dimmer module. The components and  data acquisition protocol  are specified. Fig. \ref{fig:diseno}. provides a general illustration of the system in the form of an instrumentation diagram; Fig. \ref{fig:chasis}. shows the elements that are used for this prototype.
    \begin{figure}[h]
    \centering    \includegraphics[width=0.7\linewidth]{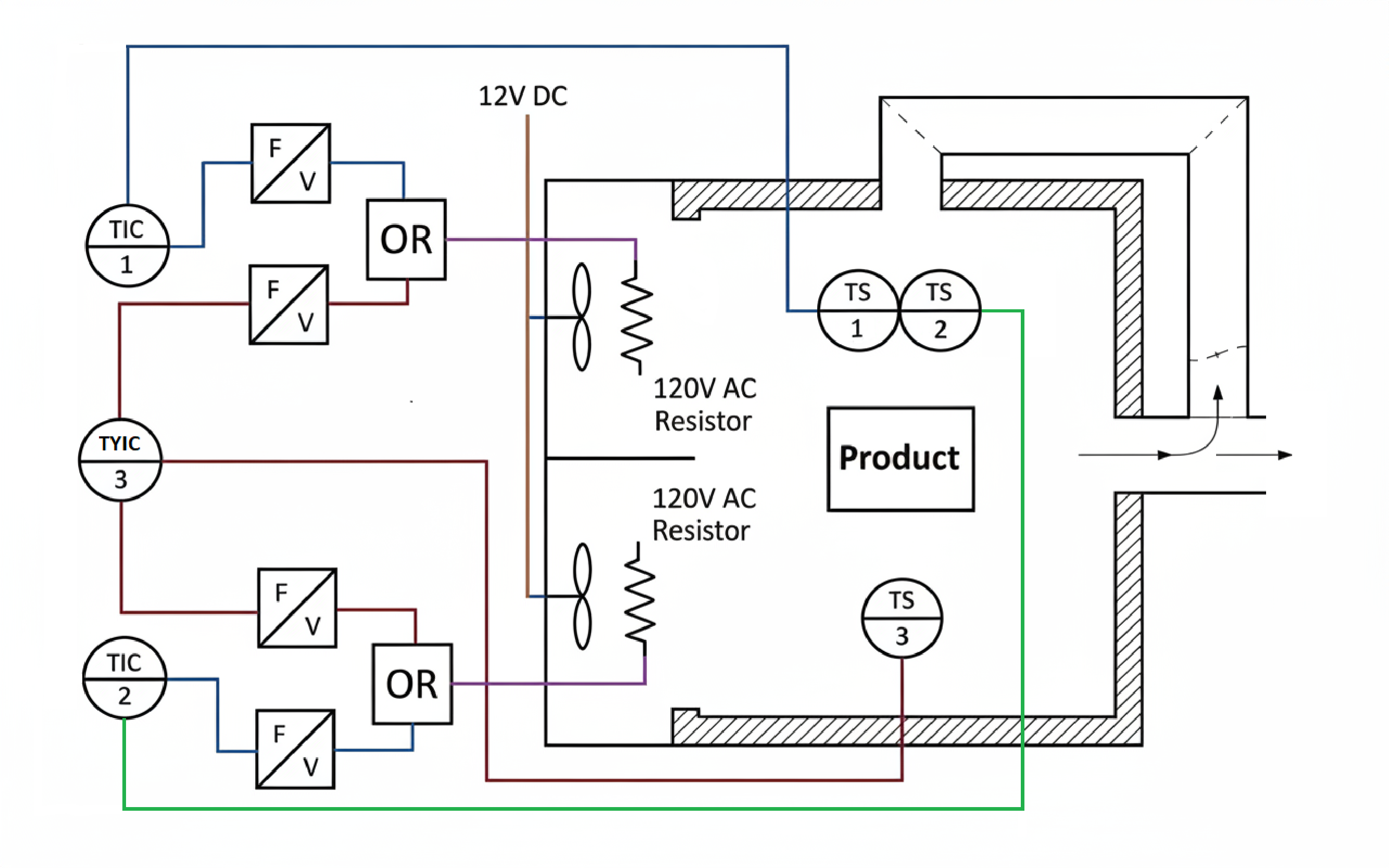}
    \caption{Prototype instrumentation diagram.}
    \label{fig:diseno}
    \end{figure}

    \begin{figure}[h]
    \centering    \includegraphics[width=0.7\linewidth]{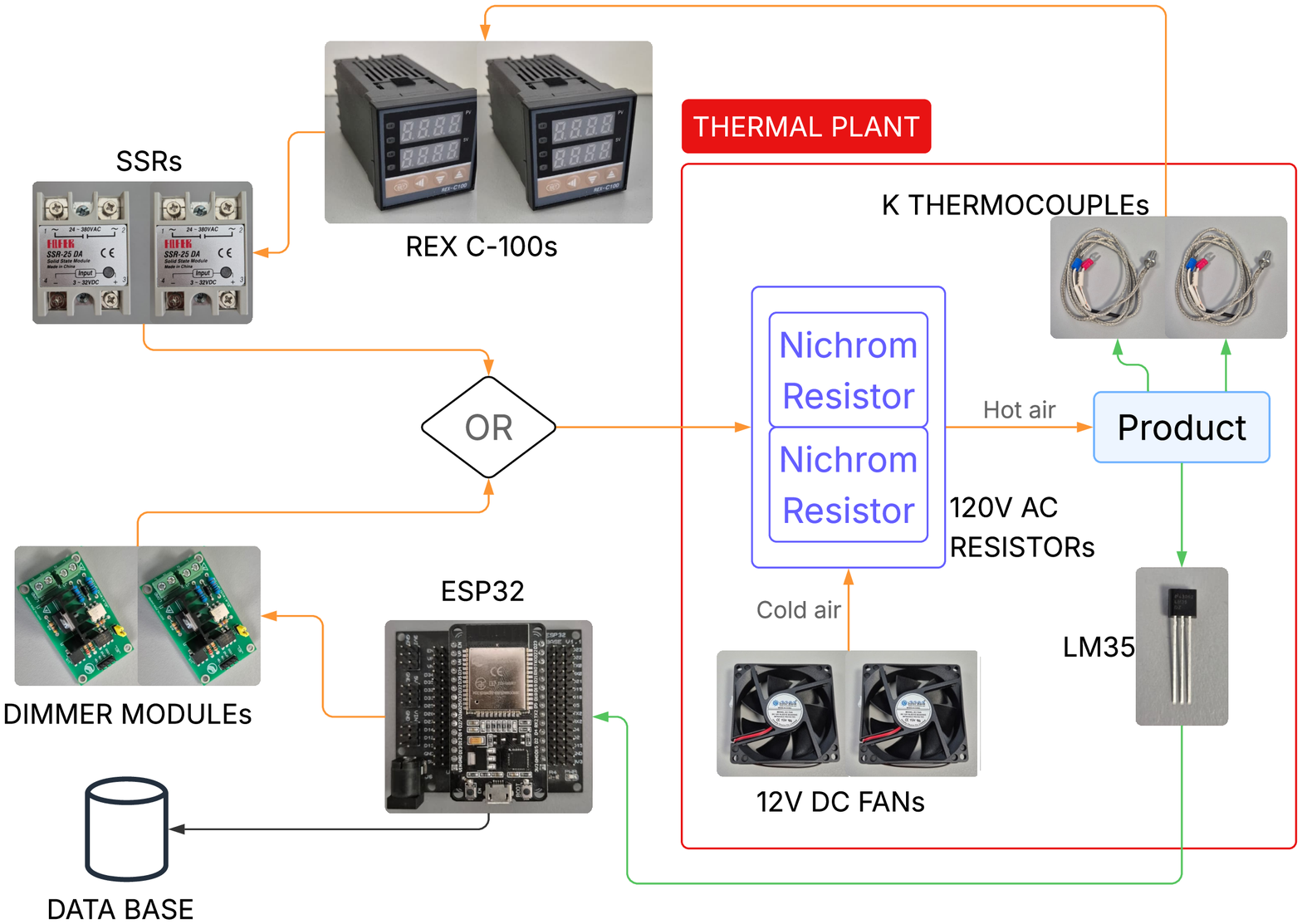}
    \caption{Elements and functionality diagram. The orange arrow carries two signals, the green arrow carries one signal, and the black arrow carries four signals.}
    \label{fig:chasis}
    \end{figure}
    One way to operate the prototype is based on classical control using two commercial PIDs with Solid State Relay (SSR), where the strategy signals $u_1, u_2$ are applied to the plant as a duty cycle in a PWM signal.  The duty cycle of a burst window computed by the REX–C100 industrial PIDs . Another way to control the plant is by using an ESP32 system with two AC dimmer modules, where the control signals are computed via the phase angle, and the set-point is expressed as an angle \(\alpha \in [10^\circ, 170^\circ]\). The module maps can be defined as: 
	\begin{align*}
		\alpha_\text{deg} =
		\begin{cases}
			170 - 160\,\dfrac{t_H - 1\text{ ms}}{1\text{ ms}}, & \text{VP, } t_H\in[1,2]\text{ ms}\\[6pt]
			170 - 160\,\dfrac{V_A}{5\text{ V}}, & \text{VA, } V_A\in[0,5]\text{ V}
		\end{cases}
	\end{align*}\normalsize
	And $\alpha = \alpha_\text{deg}\frac{\pi}{180}.$ In this case, VP is used, which computes the turn-on time for the dimmer’s PWM. Table~\ref{componentes2} describes the components used in this prototype.

 \begin{table}[h!]\centering \caption{Main components of the prototype.} \begin{tabular}{@{}ll@{}} \toprule Element & Specifications \\ \midrule Actuators & Nichrom Resistor 120 V AC, 200 W (\(\approx 1.67~\mathrm{A}\) each) \\ SSR & SSR-40DA, Out AC 24–380 V, In 3–32 VDC \\ AC Dimmer module & Angle control; BTA24-800; 5 A. \\ PID controllers & REX-C100 (SSR output) \\ ESP32 & GPIO \\ Thermal sensors & K thermocouple; LM35 (10 mV/°C) \\ \bottomrule \end{tabular}\label{componentes2} \end{table}
 From Fig. \ref{fig:diseno}. a thermal state is considered where the state $x(t)$ (control loop 3 on the Figure) represents the temperature inside the product chamber, measured in . The inputs $u_1, u_2$  represent the hot air produced by each resistor/player, and \(h > 0\) is the transport delay associated with the air re-injection to the plant by the pipe, i.e. $x(t-h)$  (see in Fig. \ref{fig:diseno}. the feedback pipe) as a scalar model of the  form given by model (\ref{S1}). An identification is required to obtain the parameters \(a_0, a_1, b_1,\) and \(b_2\) of this system. With sampling time \(T = 0.5~\mathrm{s}\) and $h = 2~\mathrm{s}$, then we have \(n_h = h/T = \mathbf{4}\) samples. The magnitude of delay $h$ is experimentally determined.
	
	RLS was applied to the stored data, with a forgetting factor \(\lambda = 0.999\), in \emph{off-line} mode. The comparison of measurement-approximation and persistent excitation signals applied to the plant is shown in Fig. \ref{fig:ident-rls}.
    \begin{figure}[h]
\centering
\includegraphics[width=0.8\linewidth]{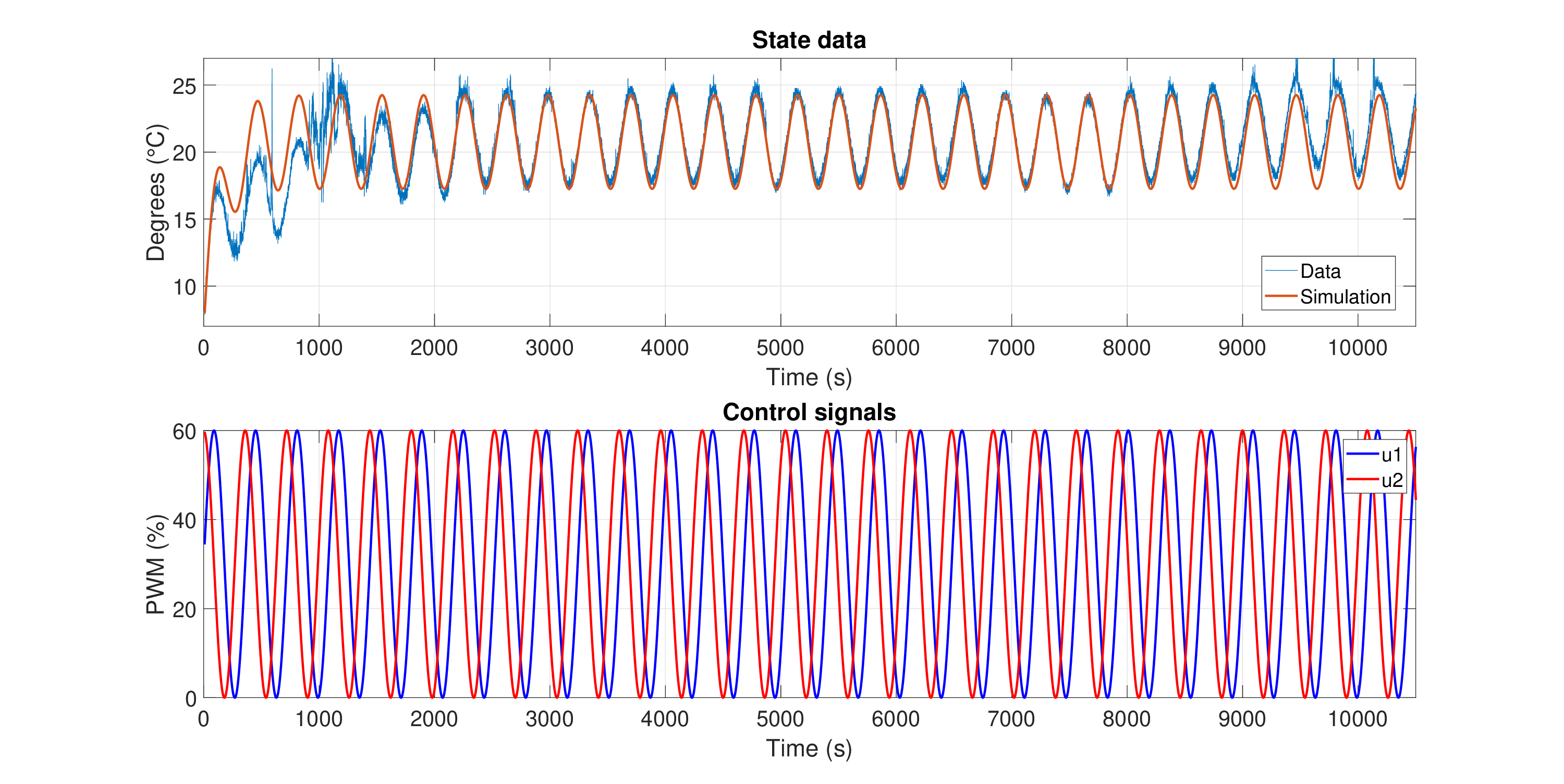}
\caption{RLS-based system identification and persistent excitation signals.}
\label{fig:ident-rls}
\end{figure}
The estimated parameters are: $
		a_0 = -0.730330839081807 ,\
		a_1 = 0.712505286229602 , \
		b_1 = 0.022853934400486 ,\
		b_2 = 0.028647298011767,$ for the model given by (\ref{S1}).
	Now, it is possible to apply the control algorithm  (the required matrices are computed in MATLAB). For this, choose: $M_1 = 500.0,\ M_2 = 50.0,\ R_{1,1} = 2.0,\   R_{1,2} = 0.2,\
	R_{2,1} = 0.1\  R_{2,2} = 1.0;$ and the calculated values for $\Pi_{0,i}$ and $\Phi_{0,i}$ are:
	$
		\Pi_{0,1}=366.2998,	\
		\Pi_{0,2}=34.9160,	\
		\Phi_{0,1}=420.7441,\
		\Phi_{0,2}=44.0238,\ $
	it can be seen that \(\Pi_{0,i}\) and \(\Phi_{0,i}\) are positive. The functions: $\Pi_{1,i}$ and $\Phi_{1,i}$, are shown in Fig. \ref{phi1fn}. where, the $*$ marks the points used of the functions $\Pi_{1,i}$ to compute the control strategies, because the data is acquired every 0.5 seconds. In Fig. \ref{phi2fn}. we can see the square regions given by the functions $\Pi_{2,i}$ and $\Phi_{2,i}$, and their  evolution.  Using these values, it is possible to apply the control strategies, which are approximated from (\ref{uop}) as:
    
		\begin{align}
			u_{i}^*\left(  k\right)  =&-\left(  R_{i,i}\right)  ^{-1}B_i^{T}\Pi_{i,0}x\left(  k\right) \\
            -&\left(
			R_{i,i}\right)  ^{-1}B_i^{T}\sum_{k_1=k-n_h}^{n_h}\Pi_{i,1}\left(  k_1\right)
			x\left(  k+k_1\right)  T.\nonumber				
		\end{align}\normalsize

        \begin{figure}[h]
        \centering		\includegraphics[width=0.8\linewidth]{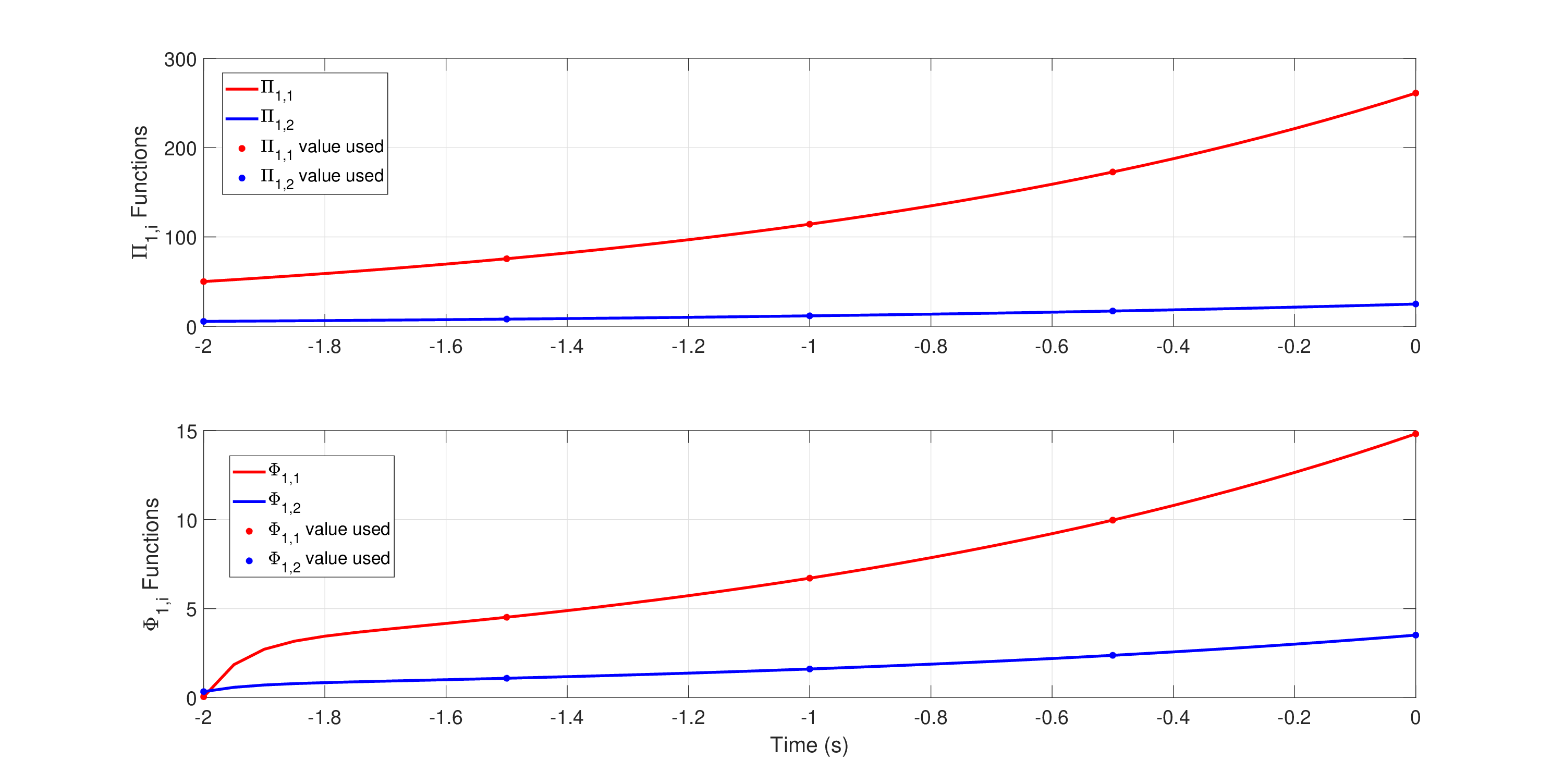}
		\caption{$\Pi_{1,i}$ and $\Phi_{1,i}$ functions.}
		\label{phi1fn}
	\end{figure}
    \begin{figure}[h]\centering	\includegraphics[width=0.8\linewidth]{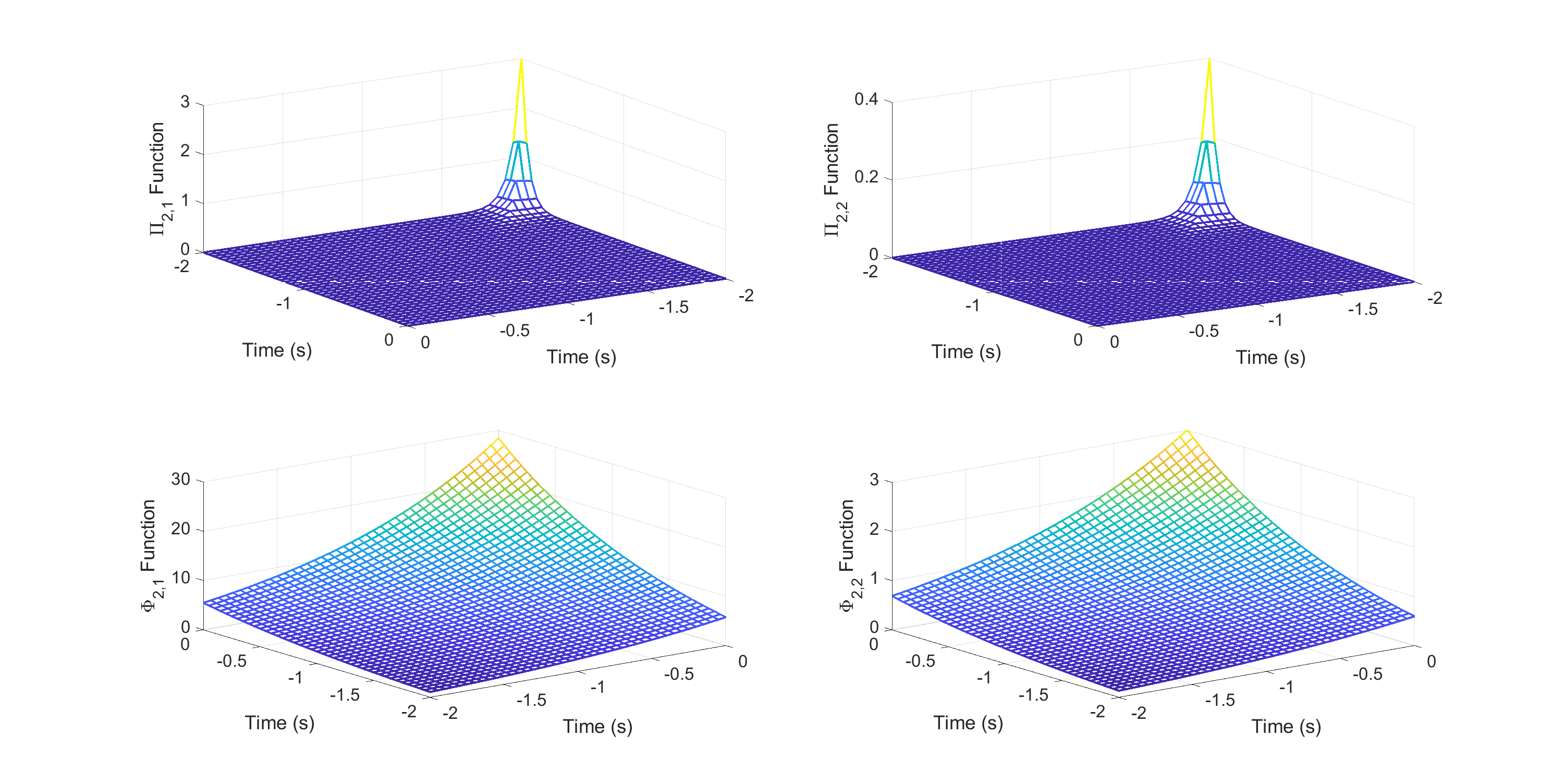}
		\caption{$\Pi_{2,i}$ and $\Phi_{2,i}$ functions.}
		\label{phi2fn}
	\end{figure}
    To validate the feasibility of this control strategy, a comparison with the classical control was carried out. Two PI on the REX-C100 controllers. Assuming the system is linear, it can be described as a sum of the effects of both controllers, due to the superposition property. These controllers are computed using the optimal algorithm in \cite{he2000pi} using a decoupled performance index. Identification is performed using the step response and MATLAB’s `tfest` command, which provides the first order transfer functions: 
	\begin{align*}
		\dfrac{G(s)}{U_1(s)}=\dfrac{0.0005756}{s+0.0001538}, \qquad \dfrac{G(s)}{U_2(s)}=\dfrac{0.0004266}{s+0.0001158}.
	\end{align*}\normalsize

	The controllers are required to stabilize the plant at 2000 seconds with a damping factor \(\zeta = 0.99\). The gains obtained by the algorithm are shown in Table \ref{gananciasPI}.
	
	\begin{table}[h!]\centering 
		\caption{PI controller parameters.}
		\begin{tabular}{c c c c c} 
			\toprule
			Player & $K_p$ & Proportional Band & $K_i$ & $T_i$ \\ \midrule
			$u_1$ & 6.642 & 15.054 & 0.000689 & 963.254 \\
			$u_2$ & 9.051 & 11.047 & 0.009304 & 972.839 \\ \bottomrule
		\end{tabular}\label{gananciasPI}
	\end{table}

	The results of the comparison between the controllers are shown below. Fig. \ref{comp}. presents the state and error comparison, and Fig. \ref{controlsignals}. shows the control signals applied by the Nash strategy. It is not possible to display the control signals applied by the PI controllers, since the signal sent by the REX-C100 would be altered by adding a sensor; and Table \ref{errors} shows the results of the error indices.
    \begin{figure}[h]\centering	\includegraphics[width=0.8\linewidth]{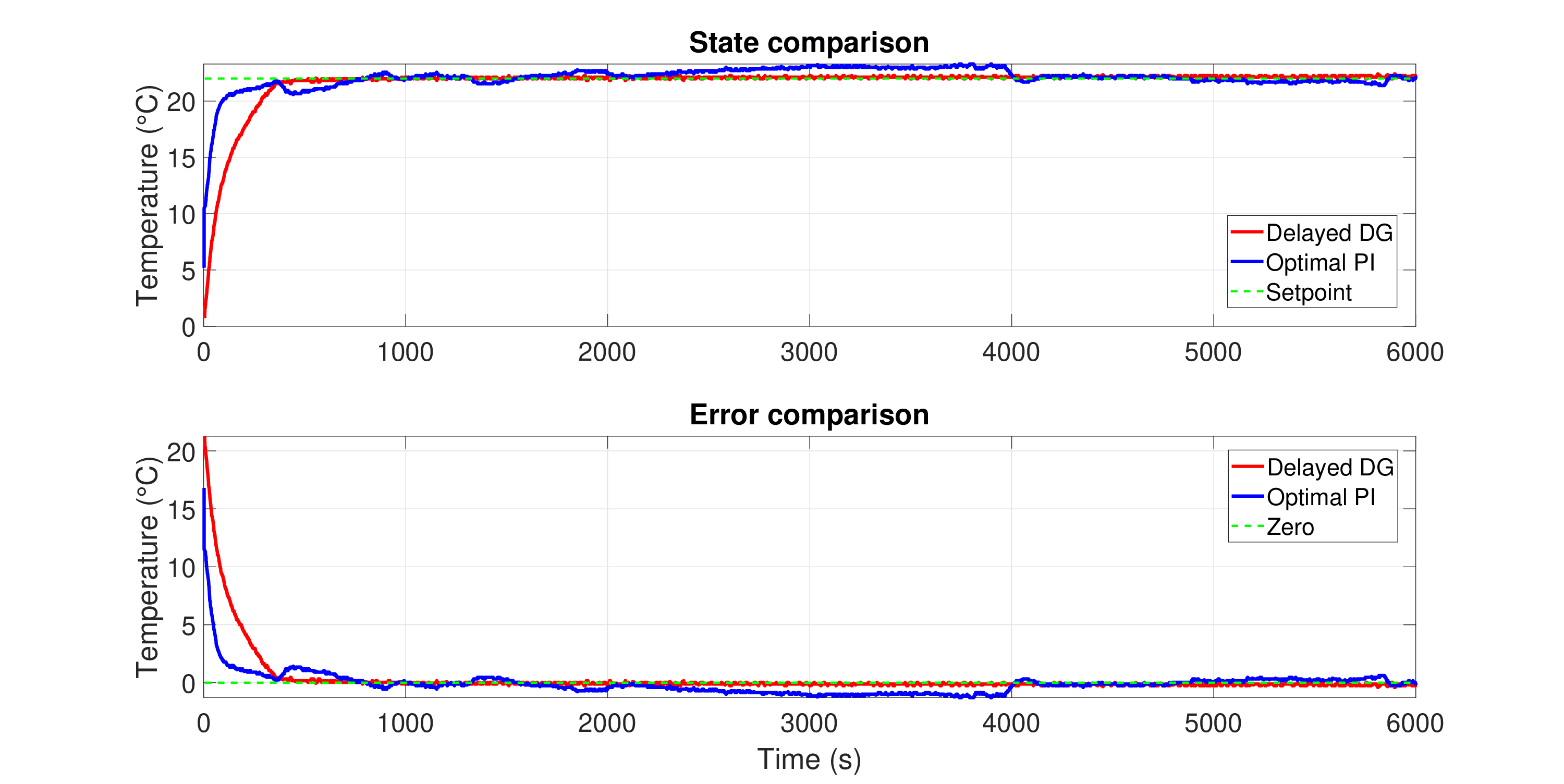}
		\caption{System behavior with PI controllers and the Nash strategy.}
		\label{comp}
	\end{figure}
	
	\begin{figure}[h]\centering	\includegraphics[width=0.8\linewidth]{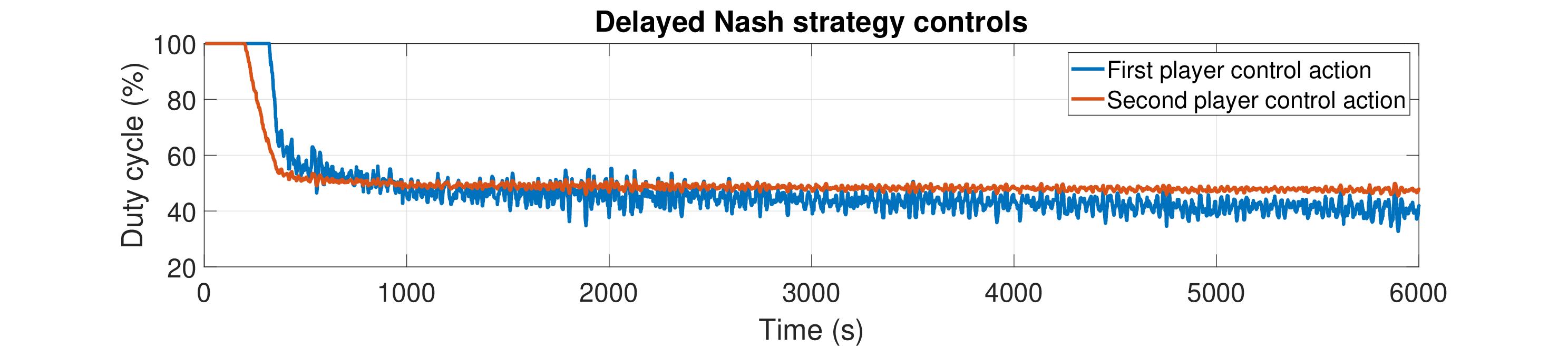}
		\caption{Control signals applied by the Nash strategy.}
		\label{controlsignals}
	\end{figure}
	
		 \begin{table}[h!]\centering 
		\caption{Comparison error indices.}
		\begin{tabular}{c c c}
			\toprule
			Index & Optimal PI & Nash strategy \\ \midrule
			ISE & $6.6886\times10^{3}$ & $2.4011\times10^{4}$\\
			IAE & $52.0637$ & $46.2959$\\
			ITSE & $1.5752\times10^{7}$ & $4.8313\times10^{6}$\\
			ITAE & $3.3610\times10^{5}$ & $2.6910\times10^{5}$\\
			\bottomrule
		\end{tabular}\label{errors}
	\end{table}

    The performance index (\ref{J_i}) can be expressed as %

    \begin{align*}
    J_{i}\left(  x_{t},u_{i},u_{-i}\right)    & =\int_{0}^{\infty}\left\{
    \int_{-h}^{0}\int_{-h}^{0}%
    \begin{bmatrix}
    x\left(  t\right)  \\
    x(t+\xi)
    \end{bmatrix}
    ^{T}%
    \begin{bmatrix}
    \Phi_{i,0} & \Phi_{i,1}\left(  \theta\right)  \\
    \Phi_{i,1}^{T}\left(  \theta\right)   & \Phi_{i,2}\left(  \xi,\theta\right)
    \end{bmatrix}%
    \begin{bmatrix}
    x\left(  t\right)  \\
    x(t+\theta)
    \end{bmatrix}
    d\xi d\theta\right.  \\
    & \left.  +u_{1}^{T}\left(  t\right)  R_{i,1}u_{1}\left(  t\right)  +u_{2}%
    ^{T}\left(  t\right)  R_{i,2}u_{2}\left(  t\right)  \right\}  dt,
    \end{align*}
    this is a cuadratic form, and if the matrix
    
    \[
    \Phi_{i}(h)=%
    \begin{bmatrix}
    \Phi_{i,0} & \Phi_{i,1}\left(  h\right)  \\
    \Phi_{i,1}^{T}\left(  h\right)   & \Phi_{i,2}\left(  h,h\right)
    \end{bmatrix}
    \]
    is positive definite, then the performance index is positive, as this matrix
    is simmetric, then their eigenvalues need to be positive. In $\left[  -h,0\right]  $
    we can calculate the eigenvalues of this matrix when the functions are used
    whith a MATLAB calculation, wich results in
    
    \begin{align*}
    eig\left(  \Phi_{1}(-2.0)\right)    & =\left[  1.0962,420.7441\right],  \\
    eig\left(  \Phi_{1}(-1.5)\right)    & =\left[  2.4465,420.7686\right] , \\
    eig\left(  \Phi_{1}(-1.0)\right)    & =\left[  5.4808,420.8347\right]  ,\\
    eig\left(  \Phi_{1}(-0.5)\right)    & =\left[  12.3337,420.9752\right]  ,\\
    eig\left(  \Phi_{1}(0.0)\right)    & =\left[  27.7937,421.2943\right],
    \end{align*}
    for the first player and
    
    \begin{align*}
    eig\left(  \Phi_{2}(-2.0)\right)    & =\left[  0.1315,44.0264\right] , \\
    eig\left(  \Phi_{2}(-1.5)\right)    & =\left[  0.2734,44.0424\right] , \\
    eig\left(  \Phi_{2}(-1.0)\right)    & =\left[  0.5825,44.0771\right] , \\
    eig\left(  \Phi_{2}(-0.5)\right)    & =\left[  1.2560,44.1514\right] , \\
    eig\left(  \Phi_{2}(0.0)\right)    & =\left[  2.7145,44.3190\right],
    \end{align*}
    for the second player, here, one can see that for this case the performance index is positive and also the conditions can be transported from (\ref{positive cond1})--(\ref{positive cond2}) to this verification.
    
    Additionally, to verify the closed loop system stability with the Nash strategies, one can use the Theorem \ref{estability} and a MATLAB computation, if we use the functions $\Pi_{0,i},\ \Pi_{1,i},$ and choose $R = 1,\ S = 0.1,\ Q = 0.1;$, then the integrand is 0.0989 and the solution of the (\ref{ARE_est}) ARE is $P=1.0899>0$ therefore the system is stable.
    
    \section*{Appendix: Quadratic performance index}
	When the performance index is quadratic, that is,
    \begin{align*}
		J_{i}\left(  u_{i},u_{-i}\right)    =\int\left(  x^{T}\left(
		t\right)  Q_i x\left(  t\right)  + \sum_{j=1}^2 u_j^{T}(t)  R_{i,j} u_{j}\left(  t\right) \right)
		dt,			
	\end{align*}\normalsize
	the procedure to be followed is the same as that outlined in Section
	 \ref{Resultados teoricos}.
	
	A particular form of the strategies $u_i^{*}(x_t)$ is selected, as given by equation (\ref{u_i}). The loop is then closed, and the solution is obtained on the Cauchy form (\ref{Sol_Cauchy}). With this, the performance index of each player is expressed, which turns out to be exactly as in (\ref{J_m}), but the expressions (\ref{Ms1})--(\ref{Ms3}) change to 
	\begin{align*}
		M_{i,1} &= Q_i+ \sum_{j=1}^2
		\Gamma_{j,0}^{T}R_{i,j}\Gamma_{j,0}, \\
		M_{i,2}\left(  \theta\right) &= \sum_{j=1}^{2}
		\Gamma_{j,0}^{T}R_{i,j}\Gamma_{j,1}\left(  \theta\right), \\
		M_{i,3}\left(  \xi,\theta\right)  &= \sum_{j=1}^2
		\Gamma_{j,1}^{T}\left(  \xi\right)  R_{i,j}\Gamma_{j,1}\left(
		\theta\right). 	
		\end{align*}
		
	When carrying out the minimization, equations (\ref{Pi_0})--(\ref{Pi_2}) retain the same form; the only difference is that in the Hamiltonian (\ref{H_h}), the term 
	$$g_i(\cdot)= x^{T}\left(
	t\right)  Q_i x\left(  t\right)  + \sum_{j=1}^2 u_j^{T}(t)  R_{i,j} u_{j}\left(  t\right),$$ even so, the form of the strategies remains as in (\ref{uop}); only the conditions that must be satisfied for the optimization of system (\ref{cond1})--(\ref{cond5}) change to
	
	\begin{align}
		-Q_{i}   =&	\Pi_{i,0}A_{0}+A_{0}^{T}\Pi_{i,0}+\sum_{j=1}^{2}
		\Pi_{j,0} D_{i,j} \Pi_{j,0} \\
		&+2\Pi_{i,1}\left(  0\right)  -2\Pi_{i,0}\sum_{j=1}^{2}C_{j}\Pi_{j,0}  ,\nonumber\\
		\frac{d\Pi_{i,1}\left(  \theta\right)  }{d\theta} =&	\Pi_{i,2}\left( 0,\theta\right)  -\Pi_{i,0} \sum_{j=1}^{2}
		C_{j}\Pi_{j,1}\left(  \theta\right)
		+  A_{0}^{T}\Pi_{i,1}\left(  \theta\right) 	 \\ 
		& + \sum_{j=1}^{2}	\Pi_{j,0}D_{i,j}\Pi_{j,1}\left(  \theta\right) -  \sum_{j=1}^{2}\Pi_{j,0}C_{j} \Pi_{i,1}\left(	\theta\right)									,\nonumber\\
		\frac{\partial\Pi_{i,2}\left(\xi,\theta\right)}{\partial\theta}&
		+\frac{\partial\Pi_{i,2}\left( \xi,\theta\right)}{\partial\xi}
		=\sum_{j=1}^{2}
		\Pi_{j,1}^{T}\left(  \xi\right)  C_{j} \Pi_{j,1}\left(\theta\right)  \\
        &-2 \sum_{j=1}^{2}
		\Pi_{j,1}^{T}\left(  \xi\right)  C_{j}  \Pi_{i,1}\left(  \theta\right)   			,\nonumber\\
		A_{1}^{T}\Pi_{i,1}\left(  \theta\right)    &=\Pi_{i,2}\left(
		-h,\theta\right)									,\\
		\Pi_{i,0}A_{1}   &=\Pi_{i,1}\left(  -h\right)		.
	\end{align}
	
	Indeed, due to the couplings in the equations, this system does not have a simple analytical solution, to the best of the authors’ knowledge.

	\begin{remark}
		By solving this problem, the results can be extended to $N$-players system, by changing the sum limits from $2$ to $n$.
	\end{remark}
	
    
\bibliographystyle{unsrt}  

\bibliography{references} 





\end{document}